\documentclass[12pt]{amsart}
\usepackage{pdfsync}
\usepackage{graphicx,color}
\usepackage{amssymb}
\usepackage{amsmath}
\usepackage{amsthm}
\usepackage{enumerate}
\usepackage{amsthm}
\usepackage{esvect}

\usepackage{ucs}
\usepackage{cite}
\usepackage{amsxtra}
\usepackage{epstopdf}
\usepackage{bbold}
\usepackage[utf8x]{inputenc}
\usepackage{verbatim} 

\usepackage[toc,page]{appendix}
\usepackage[left=3cm,top=2.5cm,right=3cm,bottom=2.5cm]{geometry}

\newcommand{\dist}{\text{dist}}

\newtheorem*{theorem*}{Theorem} 

\newtheorem{theorem}{Theorem}
\newtheorem{definition}[theorem]{Definition}
\newtheorem{proposition}[theorem]{Proposition}
\newtheorem{corollary}[theorem]{Corollary}
\newtheorem{observation}[theorem]{Observation}
\newtheorem{lemma}[theorem]{Lemma}
\newtheorem{example}[theorem]{Example}

\newtheorem{remark}[theorem]{Remark}

\newcommand{\R}{\mathbb{R}}
\def\Z{{\mathbb Z}}
\def\N{{\mathbb N}}

\makeatletter
\def\moverlay{\mathpalette\mov@rlay}
\def\mov@rlay#1#2{\leavevmode\vtop{%
   \baselineskip\z@skip \lineskiplimit-\maxdimen
   \ialign{\hfil$\m@th#1##$\hfil\cr#2\crcr}}}
\newcommand{\charfusion}[3][\mathord]{
    #1{\ifx#1\mathop\vphantom{#2}\fi
        \mathpalette\mov@rlay{#2\cr#3}
      }
    \ifx#1\mathop\expandafter\displaylimits\fi}
\makeatother




\definecolor{Azul}{rgb}{0.0, 0.0, 1.0}
\definecolor{Rojo}{rgb}{1.0, 0.03, 0.0}
\definecolor{Purpura}{rgb}{0.5,0,0.35}
\definecolor{BurntOrange}{cmyk}{0,0.51,1,0}
\definecolor{PineGreen}{cmyk}{0.92,0,0.59,0.250}
\definecolor{Violeta}{rgb}{0.39,0.17,0.63}
\definecolor{Violeta2}{rgb}{0.6, 0.4, 0.8}
\definecolor{Fucsia}{rgb}{1.0, 0.01, 0.24}
\definecolor{Rosa}{rgb}{0.63,0.17,0.39}
\definecolor{VerdeManzana}{rgb}{0.39,0.63,0.17}
\definecolor{Celeste}{rgb}{0.7,0.7,1}



\begin{document}
\thispagestyle{empty}

\title{Patterns in thick compact sets}

\author{Alexia Yavicoli}

\address{School of Mathematics and Statistics\\
University of St. Andrews}

\email{ay41@st-andrews.ac.uk or alexia.yavicoli@gmail.com}

\thanks{This work was partially supported by CONICET and the Swiss National Science Foundation, grant n$^{\circ}$ P2SKP2\_184047}

\keywords{patterns, arithmetic progressions, bi-Lipschitz patterns, thickness, Schmidt games, Cantor sets, compact sets}
\subjclass{MSC 28A78, MSC 28A80, MSC28A12 \and MSC 11B25}

\begin{abstract}
We introduce a connection between Newhouse thickness and patterns through a variant of Schmidt's game introduced by Broderick, Fishman and Simmons. This yields an explicit, robust and checkable condition that ensures that a Cantor set in the real line contains long arithmetic progressions and, more generally, homothetic copies of large finite sets.
\end{abstract}

\maketitle

\tableofcontents

\section{Introduction and main results}

\subsection{Introduction}

It is a well known consequence of the Lebesgue density theorem that any set $E \subseteq \R^N$ of positive Lebesgue measure contains homothetic copies of every finite set. A very active area of research concerns finding conditions of structure or size on sets of zero measure that imply the existence of certain patterns.  The most basic example of a pattern of interest is an arithmetic progression; note that an $m$-term arithmetic progression is a homothetic copy of the set $\{0,1,\ldots,m-1\}$.

Perhaps the most natural notion of size to consider within sets of zero Lebesgue measure is Hausdorff dimension. However, Keleti \cite{Kel2} constructed a compact subset of the real line of Hausdorff dimension $1$ that does not contain any arithmetic progressions of length $3$. This result was extended and refined by Keleti \cite{Kel1}, M\'{a}th\'{e} \cite{Mat} and the author \cite{AY17}. In the latter paper we showed that given a dimension function $h(x)$ smaller than $x$ in the natural order, there is a set of positive Hausdorff $h$-measure that does not contain any $3$-term progression (or more generally, that avoids countably many linear patterns given in advance).

In the opposite direction, it follows from results of Davies, Marstrand and Taylor \cite{DMT} and of Molter and the author \cite{UMAY} that there are perfect sets of Hausdorff dimension $0$ in the real line containing a homothetic copy of every finite set, and even every polynomial pattern.

These results indicate that Hausdorff measure and dimension cannot, in itself, detect the presence or absence of patterns in sets of measure zero, even in the most basic case of arithmetic progressions. Before continuing, we remark that large Hausdorff dimension does imply the presence of other kinds of patterns such as angles, triangles or certain polynomial configurations - see for example \cite{HKKMMMS,IosLiu,Krause19}. Moreover, Hausdorff dimension is crucial in the closely related problem of finding an abundance of configurations rather than one configuration in particular. The most classical example is the Falconer distance set problem, that asks what lower bound on the Hausdorff dimension of a set in $\mathbb{R}^d$, $d\ge 2$ implies that the sets of distances determined by the set has positive Lebesgue measure. See, for example, \cite{IosTay,GILP, GILP16,GIM18,GIOW} and references there for some examples of this fruitful line of research, covering distance sets and some natural generalizations such as ``simplex sets''.

In this paper we are concerned with sets in the real line containing long arithmetic progressions (and, more generally, homothetic copies of large finite sets). As we saw, for this problem Hausdorff dimension alone does not provide any information. {\L}aba and Pramanik \cite{LP09} showed that if in addition to having large Hausdorff dimension (in a quantitative sense), a subset of the line supports a probability measure obeying appropriate Fourier decay conditions, then it contains arithmetic progressions of length $3$. The hypotheses were relaxed, and the family of patterns covered greatly enlarged, in the subsequent papers \cite{HLP16, CLP16, FGP19}. Their techniques are based on harmonic analysis, and these methods do not usually work for longer arithmetic progressions. Moreover, the hypotheses are also difficult to check, and in fact are not known to hold or outright fail for natural classes of fractals such as central self-similar Cantor sets.

The goal of this paper is to show that Newhouse thickness, which is a different, well-known notion of size introduced by Newhouse \cite{Newhouse} in 1970, does allow to detect the presence of long arithmetic progressions (and also homothetic and more general copies of finite sets) inside fractal sets in the real line. One key advantage of Newhouse thickness is that it is easy to compute or estimate for many classical fractal sets such as self-similar sets of sets defined in terms of continued fraction coefficients. (A drawback is that no satisfactory notion of thickness exists in higher dimensions.)

\subsection{Statement of main result}

We review the definition of thickness. To begin, recall that every compact set $C$ on the real line can be constructed by starting with a closed interval $I$ (its convex hull), and successively removing disjoint open complementary intervals (they are the path-connected components of the complement of $C$). Clearly there are finitely or countably  many disjoint open complementary intervals $(G_n)_n$, which we may assume are ordered so that their length $|G_n|$ is decreasing (if some intervals have the same length, we order them arbitrarily). We emphasize that the two unbounded connected components of $\mathbb{R}\setminus C$ are not considered as we start with the convex hull $I$. Note that $G_{n+1}$ is a subset of some connected component $I_{n+1}$ (a closed interval) of $I\setminus (G_1\cup\cdots\cup G_n)$. We say that $G_{n+1}$ is removed from $I_{n+1}$.

\begin{definition}
Let $C \subset \R$ a compact set with convex hull $I$, and let $(G_n)$ be the open intervals making up $I\setminus C$, ordered in decreasing length. Each $G_n$ is removed from a closed interval $I_n$, leaving behind two closed intervals $L_n$ and $R_n$; the left and right pieces of $I_n \setminus G_n$.

We define the thickness of $C$ as
\[
\tau (C):= \inf_{n \in \N} \frac{\min \{ |L_n| , |R_n| \}}{|G_n|}.
\]
Note that the sequence of complementary intervals $(G_n)_n$ may be finite, and in this case the infimum is taken over the finite set of indices.

We define the thickness of a singleton as $0$, and the thickness of a (non-degenerate) interval as $+\infty$.
\end{definition}
It can be checked that if there are some intervals of equal length, then the way in which we order them does not affect the value of $\tau(C)$. See \cite{HKY93,Astels} for more information on Newhouse thickness and alternative definitions.

\begin{example}\label{excantor}
Let $M_{\varepsilon}$ be the middle-$\varepsilon$ Cantor set obtained by starting with the interval $[0,1]$ and repeatedly deleting from each interval appearing in the construction the middle open interval of relative length $\varepsilon$. Then $\tau (M_{\varepsilon})= \frac{1-\varepsilon}{2 \varepsilon}$.
\end{example}

Intuitively, thickness is a measure of how large the compact set is relative to the intervals in its complement. If a set has large thickness, then it also has large Hausdorff dimension. In fact (see \cite[page 77]{PalisTakens}),
\[
\text{dim}_{\rm H} (E) \geq \frac{\log(2)}{\log(2+\frac{1}{\tau (E)})}.
\]
On the other hand, a set with an isolated point has thickness zero. Similarly, one can construct a topological Cantor set of positive Lebesgue measure (in particular of full Hausdorff dimension) with thickness $0$.

The constructions showing that a set of large Hausdorff dimension/measure can avoid arithmetic progressions \cite{Kel2, Kel1, Mat,AY17} rely on ``killing'' the progressions at stages of the construction where the set looks very thin. So they exploit the fact that a set of large Hausdorff dimension may have some scales at which the set is very sparse. Unlike Hausdorff dimension, however, large thickness implies that the set is robustly ``dense'' at \emph{all} scales and positions, because it is defined as an infimum over all removed intervals $G_n$.

Newhouse defined thickness motivated by problems in dynamical systems where it is important to know that two Cantor sets intersect robustly. He proved the following famous Gap Lemma:
\begin{theorem}[Newhouse's Gap Lemma]
Given two Cantor sets $C_1, C_2 \subset \R$, such that neither set lies in a gap of the other and \textbf{$\tau(C_1) \tau(C_2) > 1$},
\[
C_1 \cap C_2 \neq \emptyset.
\]
\end{theorem}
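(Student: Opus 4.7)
The plan is to argue by contradiction: assume $\tau_1\tau_2>1$, writing $\tau_i:=\tau(C_i)$, yet $C_1\cap C_2=\emptyset$, and construct a nested sequence of \emph{linked} bridge pairs shrinking to a common point in $C_1\cap C_2$.

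First I would recall the bridge structure: removing the gaps $(G_n^i)_n$ in decreasing length from the convex hull $I_i$ produces at each stage a disjoint union of closed intervals (the bridges) whose endpoints lie in $C_i$; by definition of thickness, whenever a bridge $B$ is split as $B=L\cup G\cup R$ by its next gap $G$, one has $\min(|L|,|R|)\ge \tau_i|G|$. Two bridges $B_1$ of $C_1$ and $B_2$ of $C_2$ will be called \emph{linked} if $B_1\cap B_2\ne\emptyset$ and neither contains the other, i.e.\ their endpoints interlace. The hypothesis that neither $C_i$ lies in a gap of the other produces an initial linked pair: either $(I_1,I_2)$ are linked directly, or else a small case analysis handling one convex hull containing the other (with the contained set not confined to a single gap) yields a linked pair after one further bridge split.

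Next I would prove the core iterative step: given linked bridges $B_1=[x_1,y_1]$ and $B_2=[x_2,y_2]$ with $x_1<x_2<y_1<y_2$, let $G_i$ be the next gap of $C_i$ in $B_i$, write $G_1=(a,b)$ with $a,b\in C_1$, and assume without loss of generality that $|G_1|\ge |G_2|$. If $G_1\cap B_2=\emptyset$, then $G_1$ sits to the left of $B_2$ with $b<x_2$ (the possibility $b=x_2$ would place $x_2\in C_1\cap C_2$), and the right sub-bridge $R_1=[b,y_1]$ of $B_1$ is linked with $B_2$ and strictly contained in $B_1$. If instead $G_1$ meets $B_2$, the endpoints of $G_1$ inside $B_2$ lie in gaps of $C_2$; if both endpoints lay in the same $C_2$-gap $H\subset B_2$, then $|G_1|\le |H|\le|G_2|\le|G_1|$ would force $G_1=H$ and place $\{a,b\}\subset C_1\cap C_2$, a contradiction; so $G_1$ must properly straddle a bridge of $C_2$ sitting inside $G_1$. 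This is where $\tau_1\tau_2>1$ enters: the sub-bridges of $B_1$ flanking $G_1$ have length $\ge \tau_1|G_1|$, while the sub-bridges of $C_2$ flanking the $C_2$-gaps containing $\partial G_1$ have length $\ge \tau_2$ times those gap-lengths, and an interlacement count shows that in every remaining configuration one of the four flanking sub-bridges must again be linked with the opposite parent bridge, giving the required strict refinement.

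Finally I would iterate to obtain a nested sequence $(B_1^{(n)},B_2^{(n)})$ of linked bridges. Since each $C_i$ has countably many gaps with lengths tending to $0$, and each step replaces a bridge either by a proper sub-bridge or by a child whose size is controlled by a gap of $C_i$, the bridge lengths shrink to $0$; by nested compactness $\bigcap_n B_i^{(n)}=\{p_i\}$ with $p_i\in C_i$, and linkedness forces $p_1=p_2\in C_1\cap C_2$, contradicting our assumption. I expect the main obstacle to be the overlap case where $G_1$ meets $B_2$ without being contained in a single $C_2$-gap: a careful bookkeeping of the relative positions of $G_1, G_2$ and of the four flanking sub-bridges is required to show that every configuration either supplies a new linked pair or violates $\tau_1\tau_2>1$.
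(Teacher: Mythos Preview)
The paper does not supply its own proof of Newhouse's Gap Lemma: the statement appears in the introduction purely as background, attributed to Newhouse, and is immediately followed by the remark that it does \emph{not} extend to triple intersections, which motivates the paper's alternative route through the potential game. So there is no ``paper's proof'' to compare your proposal against.

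That said, your sketch is essentially the classical linked-bridges argument (as in Palis--Takens), and the overall architecture is sound. Two places deserve more care before the argument is complete. First, in the overlap case of your iterative step you assert that ``an interlacement count shows that in every remaining configuration one of the four flanking sub-bridges must again be linked with the opposite parent bridge''; this is exactly the heart of the lemma, and the condition $\tau_1\tau_2>1$ is used precisely here to rule out the bad configuration in which a sub-bridge of $C_1$ fits entirely inside a gap of $C_2$ \emph{and} vice versa simultaneously --- you should make that dichotomy explicit rather than leave it as a count. Second, your shrinking argument is incomplete as stated: each step refines only one of the two bridges, so you must explain why both diameters tend to $0$ (or, equivalently, why neither bridge can remain frozen forever while the other shrinks --- linkedness forces the limit point to be an endpoint of the frozen bridge, hence already in $C_1\cap C_2$). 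With those two points filled in, the proof goes through.
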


The study of patterns is closely linked with that of multiple intersections. For example, $E$ contains an arithmetic progression of length $m$ and gap $\Delta$ if  $\bigcap_{0\leq k \leq m-1} (E-k\Delta) \neq \emptyset$. As another example, $E$ contains a translated copy of the finite set $\{x_1, \cdots, x_k\}$ if and only if $\bigcap_{1\leq i \leq k}E-x_i \neq \emptyset$.

Unfortunately, the Gap Lemma does not generalize in any simple way to intersections of three or more sets. (A rather cumbersome way to do this would be via the results from \cite{HKY93} on thickness of intersections, but in the best case this would lead to results that are quantitatively much poorer than those in this paper.) So, if we want to guarantee the presence of patterns inside sets, we need a different approach. Another notion of ``largeness'' for sets of zero measure is given by the theory of Schmidt-type games. For the original game, Schmidt-winning sets are countably stable under intersection, despite being (possibly) of zero Lebesgue measure - a property that clearly fails for sets of large or even full Hausdorff dimension, and even for sets of positive (but not full) Lebesgue measure. However, for the original Schmidt's game, winning sets have full Hausdorff dimension and this excludes most fractals of interest. More recent, quantitative variants of Schmidt's game, however, allow for sets of Hausdorff dimension less than full; in particular, this is the case for the potential game introduced in \cite{BFS}. The key property that makes winning sets useful for studying patterns is that, almost by definition, the countable intersection of winning sets is again winning (with different parameters): see Proposition \ref{Countable intersection property}. The main contribution of this paper is to relate thickness and patterns via Schmidt-type winning sets:

\begin{theorem}\label{finiteexplicit}
Let $C\subset\mathbb{R}$ be a compact set. Then $C$ contains a homothetic copy of every set with at most
\[
N(\tau):=\left\lfloor\frac{\log (4)}{4 e (720)^2}\frac{\tau}{\log (\tau)}\right\rfloor
\]
elements. Moreover, for each such set $A$, the compact set $C$ contains $\lambda A+x$ for some $\lambda>0$ and a set of $x$ of positive Hausdorff dimension.
\end{theorem}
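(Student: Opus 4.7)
The plan is to reduce the theorem to a combination of two ingredients outlined in the introduction: the countable intersection property of sets winning a suitable Schmidt-type game (the potential game of Broderick, Fishman and Simmons), and a new quantitative implication \emph{thickness $\geq\tau$ $\Longrightarrow$ winning with explicit parameters}.

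\textbf{Step 1 (game-theoretic reformulation).} Observe that $C$ contains a homothetic copy $\lambda A + x$ of $A=\{a_1,\dots,a_k\}$ precisely when
\[
x \in \bigcap_{i=1}^k (C-\lambda a_i).
\]
So the goal is to show that, for some admissible scale $\lambda>0$, this intersection is nonempty and has positive Hausdorff dimension. I would fix a suitable variant of Schmidt's game (potential game of BFS) with the two properties we need: (i) countable intersections of winning sets are winning, with quantitative degradation of parameters (this is the \emph{Countable intersection property} invoked earlier in the paper), and (ii) winning sets have positive Hausdorff dimension.

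\textbf{Step 2 (thickness implies winning).} The main new input is to show that if $C$ is a compact set with thickness $\tau(C)\ge\tau$, then $C$ is winning for the potential game with parameters that are explicit functions of $\tau$. The intuition is geometric and matches the discussion after the definition of thickness: at every scale and location, the complement of $C$ inside the convex hull consists of gaps $G_n$ each removed from a parent $I_n$ with $|L_n|,|R_n|\ge\tau|G_n|$. So Alice, playing the role of removing obstacles, can always ``delete'' the gaps $G_n$ as they appear, keeping the game confined to $C$; the largeness of $\tau$ controls the total potential Alice needs to spend. Quantitatively, spending a potential comparable to $|G_n|$ to eliminate a gap of size $|G_n|$ fits into a budget growing like $\tau$, while the number of dyadic scales Alice must handle in a ball contributes a $\log\tau$ factor.

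\textbf{Step 3 (intersection and scale).} Since thickness is invariant under affine maps $x\mapsto x-\lambda a_i$, every translate $C-\lambda a_i$ is winning with the same parameters as $C$. Applying the countable intersection property to the $k$ sets $C-\lambda a_1,\dots,C-\lambda a_k$ shows that the intersection is still winning, provided $k$ is not too large relative to the parameters derived in Step 2. A careful bookkeeping of how the parameters degrade under $k$-fold intersection should produce the bound $k\le N(\tau)\sim\tau/\log\tau$, and match the explicit constant $\tfrac{\log 4}{4e\cdot 720^2}$. Finally, to guarantee the game can even start, one chooses $\lambda>0$ small enough that $\lambda A$ fits inside the convex hull of $C$ (or inside a suitable subinterval where the thickness lower bound still applies); this is the $\lambda$ in the conclusion. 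Winning then gives not just nonemptiness of the intersection but positive Hausdorff dimension.

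\textbf{Main obstacle.} The heart of the proof is Step 2: translating the combinatorial/geometric condition of large thickness into a concrete winning strategy in the potential game, with explicit quantitative control. This requires an inductive argument following Bob's moves through the hierarchy of gaps $(G_n)$, deciding which gaps Alice must pre-emptively charge against her budget at each stage, and proving that the budget never runs out as long as $\tau$ is large enough. The precise constants and the appearance of the $\log\tau$ factor should fall out of balancing the dyadic decomposition of scales against Bob's contraction ratio in the game; all other steps are then essentially formal consequences of the properties of Schmidt-type winning sets.
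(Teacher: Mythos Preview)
Your overall architecture matches the paper's exactly: potential game, thickness $\Rightarrow$ winning, countable intersection, dimension bound. However, Step~2 as you state it is false: the compact set $C$ itself is \emph{not} winning for the potential game, because Bob is free to place his initial ball $B_0$ anywhere in $\mathbb{R}$, in particular disjoint from $\text{conv}(C)$, and then Alice cannot force $x_\infty\in C$. The paper's fix (Proposition~\ref{Cantor winning}) is to prove that the \emph{filled-in} set $S:=(-\infty,0)\cup C\cup(1,+\infty)$ is $(\tfrac{1}{\tau\beta},\beta,0,\tfrac{\beta}{2})$-winning: outside $[0,1]$ the target is hit trivially, and inside $[0,1]$ Alice erases exactly one gap $G_n$ per turn, which is legal because $|G_n|\le\tfrac{1}{\tau\beta}|B_{m_n}|$. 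One then intersects the translates $S-b_i$ (not $C-b_i$), restricts to the specific ball $B=[\tfrac38,\tfrac58]$, and verifies a posteriori that $x+b_i\in[0,1]$, so that membership in $S$ upgrades to membership in $C$. Your Step~3 gestures at this localization (``$\lambda A$ fits inside the convex hull'') but does not connect it to the winning argument, and without the filled-in trick Step~2 simply does not go through.

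You have also inverted where the difficulty lies. Once one passes to $S$, the winning strategy is short and involves no budget-balancing or dyadic bookkeeping: Alice deletes a \emph{single} interval per turn (the $c=0$ game), and the thickness inequality $|G_n|\le\tau^{-1}\min\{|L_n|,|R_n|\}$ makes this move legal directly. The $\log\tau$ loss and the constant $720^2$ do not arise in Step~2 at all; they come from the explicit dimension lower bound for winning sets (Theorem~\ref{teoexplicito}, the paper's Appendix), combined with the choice $c=1-\tfrac{1}{\log(\tau/4)}$ when applying the countable intersection property. That quantitative tracking through the Broderick--Fishman--Simmons argument is the genuine technical core, not the design of Alice's strategy.
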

We make some remarks on this statement.
\begin{enumerate}
 \item The theorem has no content when $N(\tau)<3$, that is, for sets whose thickness is below some universal constant $\tau_3$ satisfying $N(\tau_3)=3$. On the other hand, it shows that for every $m$, if the thickness of $E$ is larger than some (finite) constant $\tau_m$, then $E$ contains progressions of length $m$ as well as homothetic copies of \emph{every} set with $m$ elements. This includes many classical fractal sets.
  \item The value of the constant in $N(\tau)$ is very poor and we have made no attempt at optimizing it, but nevertheless it is an explicit constant (while in many papers on the existence of patterns inside ``large'' sets, the threshold for largeness is not given explicitly, see for example \cite{LP09, HLP16, CLP16, FGP19, BFS, IosLiu}).
  \item When the diameter of the finite set is smaller than $\text{diam}(C)/8$, the proof shows that we can take $\lambda=1$, that is, we find translated copies instead of homothetic ones.
  \item Recall the definition of the central Cantor set $M_{\varepsilon}$ from Example \ref{excantor}. It was shown in  \cite[Theorem 2.1]{BFS}) that if $L_{\varepsilon}$ is the length of the longest arithmetic progression contained in $M_{\varepsilon}$ then, for $\varepsilon$ small,
      \[
      \frac{\frac{1}{\varepsilon}}{\log(\frac{1}{\varepsilon})} \lesssim L_{\varepsilon}\lesssim \frac{1}{\varepsilon}.
      \]
      (Here we used the standard notation $A\lesssim B$ if there exists a positive constant $K$ such that $A \leq KB$, and $A \sim B$ if $A\lesssim B$ and $B\lesssim A$ simultaneously.) Note that Theorem \ref{finiteexplicit} recovers the upper bound. This example shows that, in general, Theorem \ref{finiteexplicit} gives fairly efficient bounds if $\tau$ is very large.
\end{enumerate}

We can extend the previous theorem to more general bi-Lipschitz patterns.

\begin{definition}
We say $f: \R \to \R$ is a bi-Lipschitz function with constants $(c_1, c_2)$, if
\[c_1 |x-y| \leq |f(x)-f(y)|\leq c_2 |x-y| \text{ for all } x,y \in \R.\]
\end{definition}

\begin{definition}\label{defpattern}
Given $E \subset \R$, we say that $E$ contains the pattern $(f_i)_{i \in \Lambda}$ if
$$\text{there exists } t \in \R \text{ such that } f_i(t) \in E \   \forall i \in \Lambda,$$ or equivalently, if
 \begin{equation*}\bigcap_{i \in \Lambda} f^{-1}_i (E) \neq \emptyset.
 \end{equation*}

 In the particular case where the functions $f_i$ are bi-Lipschitz functions, we say that the pattern is a bi-Lipschitz pattern.
\end{definition}

We denote the convex hull of a set $C$ by $\text{conv}(C)$.

\begin{theorem}\label{BilipExplicit}Given constants $A \geq 1$, $D>0$, $m>0$, the following holds. Let $\mathcal{F}$ be a family of bi-Lipschitz functions with constants $(c_1(f), c_2(f))$ such that $\frac{c_2(f)}{c_1(f)}\leq A$, $(c_1(f))^{-1}\leq D$ and such that there exists a closed interval $I$ of length $m>0$ contained in $\bigcap_{f \in \mathcal{F} } f^{-1}([0,1])$.

Let
\[
N(\tau):=\left\lfloor \frac{\tau \beta}{720^2 e  A^{1-\frac{1}{\log(\tau \beta)} }} \left( 1 − \beta^{\frac{1}{\log(\tau \beta)}} \right) \right\rfloor,
\]
where we define $\beta:= \min\{ \frac{m}{D}; \frac{1}{4A}\}$.

Let $C\subset\mathbb{R}$ be a compact set with $\text{conv}(C)=[0,1]$ and let $S=(-\infty,0)\cup C\cup (0,\infty)$. Then for any family $\mathcal{F}_0\subset\mathcal{F}$ with $\le N(\tau(C))$ elements,
\[
\dim_{\rm H}\left(I\cap \bigcap_{f\in\mathcal{F}_0} f^{-1}(S)\right) > 0.
\]
In particular, since $S\cap f(I)\subset S\cap [0,1]\subset C$ for all $f\in\mathcal{F}$, the set $C$ contains every pattern in the family $\mathcal{F}$ with at most $N(\tau)$ elements, and in fact each such patterns is obtained for a set of $t$ (from Definition \ref{defpattern}) of positive Hausdorff dimension.

When $\tau$ is large enough, we also have that $N(\tau) \sim \frac{\tau}{\log (\tau)}$.
\end{theorem}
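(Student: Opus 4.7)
The plan is to apply the potential game variant of Schmidt's game introduced by Broderick, Fishman and Simmons. Two features of this game are crucial: (i) a set winning the potential game has positive Hausdorff dimension, with an explicit lower bound in terms of the game parameters, and (ii) finite (and indeed countable) intersections of winning sets are winning, with a controlled aggregation of parameters (this is Proposition \ref{Countable intersection property}). Given these ingredients, the proof breaks into three steps: showing that $S$ is winning, transferring winning through bi-Lipschitz pullback, and aggregating over $\mathcal{F}_0$.

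First, I would prove that for a compact set $C\subset[0,1]$ with $\text{conv}(C)=[0,1]$ and thickness $\tau$, the set $S=(-\infty,0)\cup C\cup(1,\infty)$ is winning for the potential game with explicit parameters depending on $\tau$. The geometric input is that thickness controls the size of every bounded gap $G_n$ relative to its parent interval $I_n$, forcing geometric decay of the sequence $(|G_n|)_n$. In the potential game, the player removes each gap at a cost proportional to $|G_n|^s$ (for a suitable exponent $s$) relative to the current scale; the thickness-induced geometric decay lets us sum these costs over all gaps visible at a given scale. A choice of game contraction ratio of the form $\beta^{1/\log(\tau\beta)}$ then optimizes the trade-off between the number of gaps visible per scale and the per-gap cost, and produces exactly the factor $\bigl(1-\beta^{1/\log(\tau\beta)}\bigr)$ in $N(\tau)$.

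Second, I would show that for each $f\in\mathcal{F}$, the preimage $f^{-1}(S)$ is winning on $I$ with parameters degraded by controlled factors involving $c_2(f)/c_1(f)\le A$, $1/c_1(f)\le D$, and the length $m$ of $I$. This is a routine transfer: a bi-Lipschitz map distorts scales by at most the ratio $c_2/c_1\le A$, which translates into a shift of the budget parameter in the potential game, while the condition $I\subset f^{-1}([0,1])$ localizes the game to the interval $I$ (this is where the factor $\beta=\min\{m/D,1/(4A)\}$ enters). Applying the finite intersection property to the $k=|\mathcal{F}_0|\le N(\tau)$ winning sets $\{f^{-1}(S)\}_{f\in\mathcal{F}_0}$ multiplies the cumulative budget by $k$, and the bound $k\le N(\tau)$ is exactly the threshold keeping this aggregate below the feasibility threshold of the game. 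The resulting winning set has positive Hausdorff dimension by (i), and since $S\cap f(I)\subset C$ for every $f\in\mathcal{F}$, any point in $I\cap\bigcap_{f\in\mathcal{F}_0}f^{-1}(S)$ witnesses the claimed pattern inside $C$.

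The main obstacle is the first step: translating the geometric thickness inequality into a game-theoretic statement with constants sharp enough to yield the claimed asymptotic $N(\tau)\sim \tau/\log(\tau)$. The optimization requires choosing the game contraction ratio and the auxiliary exponent $s$ simultaneously, balancing the competing effects of faster scale shrinkage (which reduces the number of visible gaps per scale) against larger cumulative cost per round. The explicit constants $720^2\,e$ and the factor $A^{1-1/\log(\tau\beta)}$ come from this bookkeeping combined with the dimension lower bound for potential-game-winning sets. The large-$\tau$ asymptotic then follows by expanding $1-\beta^{1/\log(\tau\beta)}\sim \log(1/\beta)/\log(\tau\beta)$ and noting that the other factors tend to constants depending only on $A$ and $\beta$.
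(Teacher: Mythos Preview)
Your overall architecture---show $S$ is winning for the potential game, transfer to $f^{-1}(S)$ via bi-Lipschitz, intersect using Proposition~\ref{Countable intersection property}, then invoke the quantitative dimension bound---matches the paper's proof exactly. Steps two and three are essentially correct.

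However, your description of Step~1 contains a real misconception. Thickness does \emph{not} force geometric decay of the gap sequence $(|G_n|)_n$; a thick Cantor set can have gaps of nearly equal size at many different locations. Consequently the strategy you describe, in which Alice erases all gaps visible at a given scale and pays a summed cost $\sum |G_n|^s$, is neither what the paper does nor obviously feasible. The actual mechanism (Proposition~\ref{Cantor winning}) is simpler and sharper: Alice plays the $c=0$ game, erasing \emph{at most one} gap per turn. The point is that if Bob's ball $B$ meets a gap $G_n$ and $|B|\le\min\{|L_n|,|R_n|\}$, then $B\subset L_n\cup G_n\cup R_n$, so $B$ meets no earlier gap; hence at each scale there is a unique ``first visible'' gap, and thickness guarantees $|G_n|\le\tau^{-1}\min\{|L_n|,|R_n|\}\le(\tau\beta)^{-1}|B|$, making the single erasure legal. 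Only afterwards does one upgrade to $c>0$ by monotonicity (Proposition~\ref{Monotonicity}).

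Relatedly, the optimization you attribute to Step~1 actually takes place elsewhere. The game parameter $\beta=\min\{m/D,1/(4A)\}$ is fixed by the geometry (so that $A\beta\le 1/4$ and an interval of radius $D\beta/2$ fits inside $I$); what is optimized is the exponent $c=1-1/\log(\tau\beta)$, and this choice is made when applying the dimension bound (Theorem~\ref{teoexplicito}), whose hypothesis $(n^{1/c}A\alpha)^c\le 720^{-2}(1-(A\beta)^{1-c})$ is the source of the factor $1-\beta^{1/\log(\tau\beta)}$ in $N(\tau)$. The constants $720^2$ and $e$ come from that theorem and from $\alpha^c=e\alpha$ under the chosen $c$, not from any gap-summing in the winning strategy.
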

The assumption that the convex hull is $[0,1]$ can be removed, see Corollary \ref{bilipGral} below.

\section{Examples, remarks and generalizations}

In this section we present some applications and generalizations of the main results. We start with some straightforward but useful observations.

\begin{remark}
Thickness is not monotone with respect to inclusion. However, in order to find patterns inside a set, it is clearly sufficient to find them inside a subset. In other words, Theorems \ref{finiteexplicit} and \ref{BilipExplicit} hold more generally for
\[
\tilde{\tau}(E):=\sup_{A \subseteq E \atop A\text{ compact set}}\tau(A)
\]
in place of $\tau(E)$.
\end{remark}

\begin{remark} \label{remloc}
We define the local thickness of $E$ as
\[
\tau_{\text{loc}}(E):=\sup_{x\in E}\limsup_{r \to 0+}\tau(E\cap B(x,r)).
\]
Clearly, $\tau_{\text{loc}}(E)\ge \tilde{\tau}(E)$ and hence Theorems \ref{finiteexplicit} and \ref{BilipExplicit} hold for $\tau_{\text{loc}}$ in place of $\tau$. While in general $\tau$ is not invariant under $C^1$ diffeomorphisms, it is easy to see that $\tau_{\text{loc}}(f(E))=\tau_{\text{loc}}(E)$ for every $C^1$ diffeomorphism $f$ of $\R$. Then, we get the presence of homothetic copies of finite sets (of size independent of $f$) in the set $f(E)$, where $f$ is a $C^1$-diffeomorphism.
\end{remark}

Let us now discuss some concrete examples. We recall an equivalent and useful definition of thickness :
We say $S_1$ is a chunk of $S_2$, and write $S_1 \varpropto S_2$, if $S_1$ is the intersection of a closed interval with $S_2$, is a proper subset of $S_2$, and the distance between $S_1$ and $S_2\setminus S_1$ is positive. Note that a closed set $C$ has a chunk if and only if it is not connected. Thickness can also be computed as
\[
\tau(C)=\inf_{S \varpropto C} \frac{|S|}{\dist(S, C\setminus S)}.
\]
See e.g. \cite[Proposition 2]{HKY93}. Using this variant, it is easy to estimate the thickness of self-similar sets. Recall that a compact set $C\subset \R$ is \emph{self-similar} if there exist finitely many similarities $f_1,\cdots, f_k$ such that
\[
C=\bigcup_{1\le i\le k} f_i(C).
\]

\begin{example}
Let $C\subset\R$ be a self-similar set with convex hull $I$. Assume that the intervals $f_i(I)$ are disjoint and without generality assume that they are ordered from left to right. Let $\lambda_1, \cdots, \lambda_k$ the length of the children of the first level in the construction, that is, of the intervals $f_i(I)$, and let $h_{i, i+1}$ denote the gap between the intervals $f_i(I)$ and $f_{i+1}(I)$. Then, by self-similarity and the equivalent definition of thickness, we have
\[
\tau(C)\geq \min \left\{\frac{\lambda_1}{h_{1,2}},\frac{\lambda_2}{\min\{h_{1,2}, h_{2,3}\}}, \cdots,  \frac{\lambda_{k-1}}{\min\{h_{k-2,k-1}, h_{k-1,k}\}} , \frac{\lambda_k}{h_{k-1,k}} \right\}.
\]
\end{example}

Note that this example substantially generalizes the middle-Cantor sets $M_{\varepsilon}$ from Example \ref{excantor}. Together with Theorem \ref{finiteexplicit}, it shows that if the gaps between intervals of the construction are small compared to the length of the intervals, then self-similar sets contain homothetic copies of all finite sets of a given size. We also observe that for self-similar sets, local and global thickness clearly agree, and hence Theorem \ref{finiteexplicit} and Remark \ref{remloc} show that diffeomorphic images of self-similar sets of large thickness also contain long arithmetic progressions (with the length independent of the diffeomorphism).

There is also much interest in fractals arising from non-linear dynamics, and among them one of the most classical classes of examples are fractals defined in terms of the continued fraction expansion. Given $B \subseteq \N$ we define the Cantor set $E(B)$ which consists of all numbers in $(0,1)$ with continued fraction coefficients contained in $B$. In \cite[Lemmas 4.2 and 4.3]{Astels} Astels obtained explicit lower bounds for $\tau(E(B))$ (they are rather lengthy to state in detail). In combination with Theorem \ref{finiteexplicit}, these bounds show that in many cases $E(B)$ contains long arithmetic progressions and other finite patterns. In the particular case $B:=\{1, \cdots, n\}$, this recovers and extends a bound from \cite[Theorem 1.4]{BFS}, but we emphasize that $B$ can be a more general, even infinite, set.

An important problem in additive combinatorics is the study of configurations in sumsets. Astels \cite{Astels} proved the following lower bound on the thickness of sumsets:
\begin{theorem}[\cite{Astels}, Theorem 2.4 (3)]
Let $E_1, \cdots, E_k \subset \R$ be Cantor sets, and let $s$ be defined by
\[
s = \frac{\tau(E_1)}{\tau(E_1)+1}+ \cdots + \frac{\tau(E_k)}{\tau(E_k)+1}.
\]
If $s\ge 1$ then $E_1+\cdots +E_k$ contains an interval; otherwise, $E_1+\cdots+E_k$ contains a Cantor set of thickness $\ge s/(1-s)$.
\end{theorem}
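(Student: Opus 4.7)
The plan is to reduce the general statement to the case $k=2$ by induction, and to reformulate the $k=2$ statement in terms of $\phi(\tau):=\tau/(\tau+1)$, which is a bijection from $[0,+\infty]$ onto $[0,1]$ with inverse $\phi^{-1}(s)=s/(1-s)$. In this language, the theorem becomes the additivity-type inequality
\[
\phi(\tau(E_1+\cdots+E_k)) \;\geq\; \min\!\Big(1,\;\phi(\tau(E_1))+\cdots+\phi(\tau(E_k))\Big),
\]
with the convention that $\phi=1$ encodes ``contains an interval.'' Induction on $k$ reduces this to $k=2$, applied to the pair $(E_1+\cdots+E_{k-1},E_k)$: if at an intermediate stage the partial sum $E_1+\cdots+E_{k-1}$ already contains an interval $J$, then $J+e_k\subseteq E_1+\cdots+E_k$ is itself an interval for any $e_k\in E_k$, closing that case.

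For the base case $k=2$ with $s\geq 1$ (equivalently $\tau(E_1)\tau(E_2)\geq 1$), I would show that $E_1+E_2$ equals the interval $\operatorname{conv}(E_1)+\operatorname{conv}(E_2)$. Fix $y$ in the latter interval, and consider the pair $E_1$ and $y-E_2$: these are Cantor sets with the same thicknesses as $E_1$, $E_2$, and their convex hulls intersect by choice of $y$, so neither is contained in a complementary gap of the other. Newhouse's Gap Lemma applies---strictly when $\tau_1\tau_2>1$, and in the borderline case $\tau_1\tau_2=1$ after approximating $E_2$ by a nested sequence of slightly thicker compact supersets and taking a limit via compactness---and yields $E_1\cap(y-E_2)\neq\emptyset$, i.e.\ $y\in E_1+E_2$.

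For the base case $k=2$ with $s<1$, the Gap Lemma is unavailable, so one must analyze the gap structure of $F:=E_1+E_2$ directly. I would build $F$ as the monotone limit of approximations $F_{n,m}:=E_1^{(n)}+E_2^{(m)}$, where $E_i^{(n)}$ is obtained from $\operatorname{conv}(E_i)$ by removing the $n$ largest gaps of $E_i$ (so $E_i^{(n)}$ is a finite union of closed intervals). Passing from $E_i^{(n)}$ to $E_i^{(n+1)}$ either fails to produce a new gap of the sum (the newly removed gap of $E_i$ is shorter than the abutting child interval of $E_j$ and is thereby filled in) or produces a new gap of $F$ whose length is the original gap length in $E_i$ minus the relevant child-interval length of $E_j$, and whose flanking pieces in $F$ are Minkowski sums of the flanking child intervals of the underlying gap of $E_i$ with the full parent interval of the abutting child of $E_j$. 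A direct algebraic manipulation starting from the definition of $\phi$ then gives, for each such new gap of $F$, the per-gap ratio $\min(|L|,|R|)/|G| \geq \phi^{-1}(\phi(\tau_1)+\phi(\tau_2)) = s/(1-s)$.

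The principal obstacle is the bookkeeping in the last paragraph: one must verify that every gap of $F$ produced by the inductive construction can be consistently associated with a parent interval whose flanking pieces in $F$ satisfy the stated bound, and that the ordering of gaps this construction produces is compatible with the canonical decreasing-length ordering used to define $\tau(F)$. Subtleties include equal-length gaps, the boundary case where a newly removed gap just barely fails to produce a new gap of $F$, and ruling out hypothetical gaps of $F$ arising from a nontrivial combination of gaps in both $E_1$ and $E_2$ (purely from neither). Once these details are settled the argument closes and yields $\phi(\tau(F))\geq\phi(\tau(E_1))+\phi(\tau(E_2))$, completing the $k=2$ case and hence the theorem.
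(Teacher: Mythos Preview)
The paper does not prove this theorem: it is quoted verbatim as \cite[Theorem 2.4 (3)]{Astels} and immediately used as a black box to draw conclusions about sumsets, with no argument supplied. There is therefore no ``paper's own proof'' to compare your proposal against.

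A few remarks on your sketch nonetheless. The reformulation via $\phi(\tau)=\tau/(\tau+1)$ and the induction on $k$ are the right organizing ideas, and they are in the spirit of Astels' original argument. Two points deserve care. First, in the inductive step you must apply the $k=2$ case not to $E_1+\cdots+E_{k-1}$ itself but to the Cantor \emph{subset} of it with the guaranteed thickness (the theorem only asserts containment of such a set, not a thickness bound on the sum); this is harmless because Minkowski sums respect inclusion, but it should be said. Second, in the $s\ge 1$ case you assert that intersecting convex hulls implies that neither of $E_1$, $y-E_2$ lies in a gap of the other; this implication is false as stated, and a single invocation of the Gap Lemma does not suffice. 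One must instead run a nested descent: if, say, $E_1$ lies in a gap of $y-E_2$, replace $y-E_2$ by the bridge of that gap and iterate, using $\tau_1\tau_2\ge 1$ to control the relative sizes and force eventual nontrivial overlap. The $s<1$ bookkeeping you flag is genuinely the heart of the matter and is where Astels does the real work.
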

So, by using any of our theorems, we can guarantee the presence of arithmetic progressions or homothetic copies of finite sets in sums of Cantor sets. This is non-trivial when the value $s$ from the theorem is close to but smaller than $1$. We note that there is a large literature on the dimension, measure and interior of sumsets of Cantor sets in the line (see, for example, \cite{CHM, PeresSolomyak, PeresShmerkin}), however many (but not all) of these results hold generically and give no information for explicit Cantor sets, or they deal with Hausdorff dimension only, which as we have seen on its own gives no information on the presence of arithmetic progressions.

In Theorem \ref{BilipExplicit}, we assumed that the convex hull of the compact set is $[0,1]$. In the following corollary, we get rid of this condition:
\begin{corollary}\label{bilipGral}
Given constants $A \geq 1$, $D>0$, $m>0$, $\lambda>0$, $t\in \R$ the following holds. Let $\mathcal{F}$ be a family of bi-Lipschitz functions with constants $(c_1(f), c_2(f))$ such that $\frac{c_2(f)}{c_1(f)}\leq A$, $\lambda(c_1(f))^{-1}\leq D$ and such that there exists a closed interval $I$ of length $m>0$ contained in $\bigcap_{f \in \mathcal{F} } f^{-1}(\lambda[0,1]+t)$.

Let
\[
N(\tau):=\left\lfloor \frac{\tau \beta}{720^2 e  A^{1-\frac{1}{\log(\tau \beta)} }} \left( 1 − \beta^{\frac{1}{\log(\tau \beta)}} \right) \right\rfloor,
\]
where $\beta:= \min\{ \frac{m}{D}; \frac{1}{4A}\}$.

Let $C\subset\mathbb{R}$ be a compact set with $\text{conv}(C)=[t,t+\lambda]$. Then
\[
\dim_{\rm H}\left(I\cap \bigcap_{f\in\mathcal{F}_0} f^{-1}\big((-\infty, t)\cup C\cup (t+\lambda,\infty)\big)\right) > 0.
\]
for all subfamilies $\mathcal{F}_0$ of $\mathcal{F}$ with at most $N(\tau(C))$ elements.
\end{corollary}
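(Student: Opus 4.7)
The plan is to reduce Corollary \ref{bilipGral} to Theorem \ref{BilipExplicit} via a single affine change of coordinates that normalizes $\mathrm{conv}(C)$ to $[0,1]$. Concretely, I would introduce the similarity
\[
\varphi:\R\to\R,\qquad \varphi(x)=\lambda x+t,
\]
so that $\varphi([0,1])=[t,t+\lambda]$, and set $C':=\varphi^{-1}(C)$. Then $\mathrm{conv}(C')=[0,1]$, and since $\varphi$ is an affine bijection its inverse sends each removed complementary interval of $C$ to a corresponding complementary interval of $C'$, scaled by the same factor $1/\lambda$ on both numerator and denominator in the definition of $\tau$; hence $\tau(C')=\tau(C)$.

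Next I would transport the family $\mathcal{F}$ through $\varphi$. For each $f\in\mathcal{F}$ define
\[
\tilde f:=\varphi^{-1}\circ f,\qquad \text{so that}\qquad \tilde f(x)=\frac{f(x)-t}{\lambda}.
\]
A direct computation gives $|\tilde f(x)-\tilde f(y)|=|f(x)-f(y)|/\lambda$, so $\tilde f$ is bi-Lipschitz with constants $(c_1(f)/\lambda,c_2(f)/\lambda)$. Consequently the ratio of these constants is still $\le A$, and the reciprocal of the lower constant equals $\lambda/c_1(f)\le D$. Moreover $\tilde f^{-1}([0,1])=f^{-1}(\varphi([0,1]))=f^{-1}([t,t+\lambda])=f^{-1}(\lambda[0,1]+t)$, so the interval $I$ of length $m$ from the hypothesis is contained in $\bigcap_{f\in\mathcal{F}}\tilde f^{-1}([0,1])$. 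Thus the family $\tilde{\mathcal{F}}:=\{\tilde f:f\in\mathcal{F}\}$ meets every hypothesis of Theorem \ref{BilipExplicit} with the same parameters $A,D,m$, and in particular with the same value of $N(\tau)$ since $\tau(C')=\tau(C)$.

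Applying Theorem \ref{BilipExplicit} to $C'$ and $\tilde{\mathcal{F}}$ yields, for any subfamily $\mathcal{F}_0\subseteq\mathcal{F}$ of cardinality at most $N(\tau(C))$,
\[
\dim_{\rm H}\!\left(I\cap \bigcap_{f\in\mathcal{F}_0}\tilde f^{-1}(S')\right)>0,
\]
where $S'=(-\infty,0)\cup C'\cup(1,\infty)$. To finish, I would pull back through $\varphi$: since $\varphi$ maps $(-\infty,0)$ to $(-\infty,t)$, $C'$ to $C$, and $(1,\infty)$ to $(t+\lambda,\infty)$, we have $\varphi(S')=(-\infty,t)\cup C\cup(t+\lambda,\infty)$, and therefore $\tilde f^{-1}(S')=f^{-1}(\varphi(S'))=f^{-1}\!\bigl((-\infty,t)\cup C\cup(t+\lambda,\infty)\bigr)$. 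Substituting this identity into the displayed inequality gives exactly the conclusion of the corollary.

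The argument is essentially bookkeeping; the only points that require care are checking that both bi-Lipschitz conditions survive the rescaling (which is why the hypothesis is phrased as $\lambda(c_1(f))^{-1}\le D$ rather than $(c_1(f))^{-1}\le D$) and that thickness is invariant under the similarity $\varphi$, so that the same threshold $N(\tau(C))$ appears on both sides of the reduction. There is no genuine obstacle beyond keeping the affine normalization consistent.
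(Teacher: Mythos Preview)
Your proposal is correct and follows essentially the same approach as the paper: the paper defines $T(x)=\frac{1}{\lambda}(x-t)$ (your $\varphi^{-1}$), sets $E=T(C)$, applies Theorem \ref{BilipExplicit} to the family $\{T\circ f:f\in\mathcal{F}\}$, and then unwinds the identity $(T\circ f)^{-1}\bigl((-\infty,0)\cup E\cup(1,\infty)\bigr)=f^{-1}\bigl((-\infty,t)\cup C\cup(t+\lambda,\infty)\bigr)$. In fact you supply more detail than the paper, which simply asserts that the transported family ``satisfies all the hypotheses'' without verifying the bi-Lipschitz constants or the interval condition.
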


\begin{proof}
We define $T(x):=\frac{1}{\lambda}(x-t)$ and $E:=T(C)$. We have that $\tau (E)=\tau (C)$ and $\text{conv}(E)=[0,1]$.


By applying Theorem \ref{BilipExplicit} to $\widetilde{\mathcal{F}}:=\{T \circ f:f\in\mathcal{F}\}_i$ and $E$ (that satisfy all the hypotheses), we get that
\[
\dim_{\rm H}\left(I\cap  \bigcap_{i=1}^k (T\circ f_i)^{-1}\big((-\infty,0)\cup E \cup (0,\infty)\big)  \right)>0
\]
whenever $f_i\in\mathcal{F}$ and $k\le N(\tau(E))=N(\tau(C))$. But
\[
(T\circ f_i)^{-1}\big((-\infty,0)\cup E \cup (0,\infty)\big) = f_i^{-1} \left((-\infty, t)\cup C\cup (t+\lambda,\infty)\right),
\]
so this completes the proof.
\end{proof}

Besides linear patterns such as arithmetic progressions, polynomial patterns have attracted a lot of attention. Even though polynomial functions are not bi-Lipschitz in general, by considering intervals on which they are injective, we can still apply Corollary \ref{bilipGral} in order to ensure polynomial patterns in thick compact sets. Let $P_1, \cdots, P_k$ be a family of polynomials (or more general smooth functions) which are all injective on some interval $I$ of positive Lebesgue measure, and suppose
\[
0<c_1\leq P_i'|_I \leq c_2<+\infty
\]
and that $P_i^{-1}([b, b+1]) \supseteq I$ for some $b$. We can then extend $P_i|_I$ to a $(c_1,c_2)$-bi-Lipschitz function $\overline{P}_i$ on all of the real line. Let $C$ be a compact set whose convex hull is $[b,b+1]$. By Corollary \ref{bilipGral}, if $\tau(C)$ is large enough in terms of $k$, then there exists
\[
t \in I\cap \bigcap_{1\leq i \leq k} \overline{P}_i^{-1}\big((-\infty,b)\cup C\cup (b+1,\infty)\big).
\]
Since $\overline{P}_i(I)=P_i(I)\subset [b,b+1]$, we get that $P_i(t)\in C$ for all $i$.

As a concrete example, we consider a family of quadratic functions
\[
P_i(x):=(x-x_i)^2+y_i,
\]
where $y_i \geq 0$, and
\[\max_i \sqrt{b-y_i}+x_i< \min_j \sqrt{b+1-y_j}+x_j
\]
for some $b>\max\{x_1, \cdots, x_n, y_1, \cdots, y_n \}$. This is clearly possible if the $x_i,y_i$ are small enough in terms of $b$. Then every sufficiently thick compact set $C$ with $\text{conv}(C)=[b,b+1]$ contains the given quadratic pattern $\{P_1, \cdots, P_n\}$. We can easily check that the required conditions are satisfied by taking
\[
I:=[\max_i \sqrt{b-y_i}+x_i, \min_j \sqrt{b+1-y_j}+x_j]
\]
and $0<c_1:=2\sqrt{b-\max \{y_1, \cdots, y_n\}}\leq c_2:=2\sqrt{b+1}<+\infty$.


\section{The potential game}

We introduce a particular case of a game of Schmidt type defined in \cite{BFS}.

\begin{definition}
Given $\alpha, \beta, \rho >0$ and $c \geq 0$, Alice and Bob play the $(\alpha, \beta, c, \rho)$-game in the real line under the following rules:
\begin{itemize}
\item For each $m \in \N_{0}$ Bob plays first, and then Alice plays.
\item On the $m$-th turn, Bob plays a closed ball $B_m:=B[x_m , \rho_m ]$, satisfying $\rho_0 \geq \rho$, and $\rho_{m}\geq \beta \rho_{m-1}$ and $B_m \subseteq B_{m-1}$ for every $m \in \N$.
\item On the $m$-th turn Alice responds by choosing and erasing a finite or countably infinite collection $\mathcal{A}_m = A(\rho_{i,m} , h_{i,m})$ of balls of radius $\rho_{i,m}>0$ and center $h_{i,m}$. Alice's collection must satisfy $\sum_{i} \rho_{i,m}^c \leq (\alpha \rho_m )^c$ if $c>0$, or $\rho_{1,m} \leq \alpha \rho_m$ if $c=0$ (in the case $c=0$ Alice can erase just one set).
\item $\lim_{m \to \infty} \rho_m =0$ (Note that this is a non-local rule for Bob. One can define this game without this rule, adding that Alice wins if $\lim_{m \to \infty} \rho_m \neq 0$. But, to make the definitions simpler we added this condition as a rule for Bob.)
\end{itemize}
\end{definition}

In any case, Alice is allowed not to erase any set, or equivalently to pass her turn.

There exists a single point $x_{\infty} = \bigcap_{m \in \N_0} B_m$ called the outcome of the game. We say a set $S \subset \R$ is a winning set if Alice has a strategy guaranteeing that if $x_{\infty} \notin \bigcup_{m \in \N_0} \bigcup_i A(\rho_{i,m} , h_{i,m})$, then $x_{\infty} \in S$.

Note that the conditions $B_0 \supseteq B_1 \supseteq \cdots$ and $\lim_{m \to \infty} \rho_m =0$ imply $\beta < 1$.

Intuitively, the balls chosen by Bob determine where to ``zoom in'' in the set, and Alice erases as much as possible from the complement of $S$. If $S$ was a self-similar Cantor set such as the sets $M_{\varepsilon}$ from Example \ref{excantor}, Bob would choose gaps in the construction of the Cantor set, and Alice would try to erase as much as possible of these gaps. The set $S$ is winning if for any sequence of ``zooming-ins'' chosen by Bob, Alice has a strategy of erasures that allows her to either erase the point $x_\infty$ or ensure it ends up lying in $S$.

The original game in \cite{BFS} allowed Alice to answer with $\rho_{i,n}$-neighborhoods of sets in a given family, and the ambient space was $\R^d$. In our case the family is the family of every singleton in $\R$.

The following properties are easy to see (see \cite{BFS}):
\begin{proposition}[Countable intersection property]\label{Countable intersection property}
Let J be a countable index set, and for each $j \in J$ let $S_j$ be an $(\alpha_j , \beta, c, \rho)$-winning set, where $c>0$. Then, the set $S:= \bigcap_{j \in J} S_j$ is $(\alpha ,\beta, c, \rho)$-winning where $\alpha^c = \sum_{j \in J} \alpha_j^c$ (assuming that the series converges).
\end{proposition}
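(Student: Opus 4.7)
The plan is to combine the given winning strategies into a single one by playing all of the subgames $S_j$ in parallel against the same sequence of Bob's moves, and distributing the $c$-mass budget $(\alpha\rho_m)^c$ among the countably many subgames. For each $j \in J$, let $\sigma_j$ denote Alice's $(\alpha_j,\beta,c,\rho)$-winning strategy for $S_j$.

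Against Bob's play $B_0, B_1, \ldots$ in the combined game, Alice maintains an auxiliary record for each $j$: she feeds the common history $(B_0,\ldots,B_m)$ together with the previously simulated responses $\mathcal{A}_0^{(j)},\ldots,\mathcal{A}_{m-1}^{(j)}$ into $\sigma_j$ to obtain the round-$m$ simulated move $\mathcal{A}_m^{(j)}$. Since $\sigma_j$ is legal in the $(\alpha_j,\beta,c,\rho)$-game, the balls in $\mathcal{A}_m^{(j)}$ have radii $\rho_{i,m}^{(j)}$ satisfying $\sum_i (\rho_{i,m}^{(j)})^c \leq (\alpha_j \rho_m)^c$. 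Alice's \emph{actual} round-$m$ move is
\[
\mathcal{A}_m := \bigcup_{j\in J} \mathcal{A}_m^{(j)},
\]
which is a countable collection of balls whose total $c$-mass is bounded by
\[
\sum_{j\in J}\sum_i (\rho_{i,m}^{(j)})^c \;\leq\; \sum_{j\in J}(\alpha_j \rho_m)^c \;=\; \alpha^c \rho_m^c,
\]
so it meets the $\alpha$-budget constraint of the $(\alpha,\beta,c,\rho)$-game.

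Finally, let $x_\infty$ be the outcome and suppose $x_\infty$ is not erased in the combined game, i.e., $x_\infty \notin \bigcup_{m,j,i} A(\rho_{i,m}^{(j)}, h_{i,m}^{(j)})$. Fix any $j\in J$. The sequence $(B_m,\mathcal{A}_m^{(j)})_{m}$ is a legitimate play of the $(\alpha_j,\beta,c,\rho)$-game with the same outcome $x_\infty$, in which Alice plays according to the winning strategy $\sigma_j$ and $x_\infty$ lies in none of the erased balls. Hence $x_\infty \in S_j$. As $j$ was arbitrary, $x_\infty \in \bigcap_{j\in J} S_j = S$, proving that the combined strategy is $(\alpha,\beta,c,\rho)$-winning for $S$. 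The only point requiring care is the correct summation of the $c$-masses and the observation that running $\sigma_j$ on the shared Bob-play yields, from its own perspective, an entirely legal history of the $j$-th subgame; this is pure bookkeeping rather than a genuine obstacle, as is typical for countable-intersection arguments in Schmidt-type games.
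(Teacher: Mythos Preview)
Your proof is correct and is precisely the standard argument for the countable intersection property in Schmidt-type games. The paper itself does not supply a proof of this proposition; it simply states that it is ``easy to see'' and cites \cite{BFS}, so there is nothing further to compare---your write-up is exactly the routine verification one expects here.
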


\begin{proposition}[Monotonicity]\label{Monotonicity}
If $S$ is $(\alpha , \beta, c, \rho)$-winning and $\tilde{\alpha} \geq \alpha$, $\tilde{\beta} \geq \beta$, $\tilde{c} \geq c$, and $\tilde{\rho} \geq \rho$, then $S$ is $(\tilde{\alpha} , \tilde{\beta}, \tilde{c}, \tilde{\rho})$-winning.
\end{proposition}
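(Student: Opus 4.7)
The plan is standard monotonicity: take any Alice strategy $\sigma$ witnessing that $S$ is $(\alpha,\beta,c,\rho)$-winning and run it verbatim against Bob in the $(\tilde{\alpha},\tilde{\beta},\tilde{c},\tilde{\rho})$-game. Since the outcome $x_\infty=\bigcap_m B_m$ depends only on Bob's sequence of balls, it suffices to verify two things: (a) every legal Bob play in the new game is also a legal Bob play in the old game, so that $\sigma$ is defined on it; and (b) every response prescribed by $\sigma$ is legal for Alice in the new game. If both hold, then applying the winning property of $\sigma$ in the old game yields either $x_\infty\in S$ or $x_\infty\in\bigcup_{m,i}A(\rho_{i,m},h_{i,m})$, which is exactly what winning means in the new game.

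Check (a) is immediate: the constraint $\rho_0\ge\tilde{\rho}$ implies $\rho_0\ge\rho$; the constraint $\rho_m\ge\tilde{\beta}\rho_{m-1}$ implies $\rho_m\ge\beta\rho_{m-1}$; and the remaining conditions $B_m\subseteq B_{m-1}$ and $\lim_{m\to\infty}\rho_m=0$ do not involve the parameters. So any legal sequence $(B_m)$ in the new game is legal in the old game, and $\sigma$ outputs a well-defined response $\mathcal{A}_m=\{A(\rho_{i,m},h_{i,m})\}_i$ satisfying the old Alice constraint at each step.

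Check (b) has two ingredients. Since $\alpha\le\tilde{\alpha}$, it is enough to show $\sum_i\rho_{i,m}^{\tilde{c}}\le(\alpha\rho_m)^{\tilde{c}}$ (or $\rho_{1,m}\le\alpha\rho_m$ when $\tilde{c}=0$). The old constraint $\sum_i\rho_{i,m}^c\le(\alpha\rho_m)^c$ forces each ratio $t_i:=\rho_{i,m}/(\alpha\rho_m)$ to lie in $[0,1]$. The only real point is the elementary monotonicity of $s\mapsto t^s$ on $[0,1]$: for $\tilde{c}\ge c\ge 0$ one has $t_i^{\tilde{c}}\le t_i^c$, hence $\sum_i t_i^{\tilde{c}}\le\sum_i t_i^c\le 1$, which rescales to $\sum_i\rho_{i,m}^{\tilde{c}}\le(\alpha\rho_m)^{\tilde{c}}\le(\tilde{\alpha}\rho_m)^{\tilde{c}}$. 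The corner cases where $c=0$ or $\tilde{c}=0$ are handled identically, since the old constraint $\rho_{1,m}\le\alpha\rho_m$ still yields $t_1\in[0,1]$ and Alice's strategy there erases only a single ball.

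There is no genuine obstacle; the argument is a direct unpacking of the definitions, the only mildly delicate ingredient being the $c$-monotonicity observation that the $\ell^c$ ``potential'' of Alice's erasures is a non-increasing function of $c$ once the individual radii are normalized into $[0,1]$ by the size constraint.
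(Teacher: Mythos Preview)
Your proposal is correct and is exactly the standard argument the paper has in mind: the paper does not spell out a proof but simply states that the property is ``easy to see'' (with a reference to \cite{BFS}), and your verification that Bob's constraints loosen while Alice's constraints tighten---with the key observation that $t\in[0,1]$ implies $t^{\tilde c}\le t^c$ for $\tilde c\ge c$---is precisely the intended unpacking.
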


\begin{proposition}[Invariance under similarities]\label{Invariance under similarities}
Let $f:\R \to \R$ be a bijection that satisfies $$d(f(x),f(y))=\lambda d(x,y) \ \forall x,y \in \R.$$
Then a set $S$ is $(\alpha , \beta, c, \rho)$-winning if and only if the set $f(S)$ is $(\alpha , \beta, c, \lambda \rho)$-winning
\end{proposition}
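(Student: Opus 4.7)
The plan is to show that a similarity $f$ with scaling factor $\lambda$ produces a canonical bijection between plays of the two games, under which Alice's winning strategies transfer between them. Since $f$ is itself invertible and its inverse is a similarity with scaling factor $1/\lambda$, it suffices to prove one direction; the converse follows by symmetry.

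First I would set up the correspondence on Bob's side. Suppose Alice has a winning strategy in the $(\alpha,\beta,c,\rho)$-game targeting $S$. Given any legal sequence of Bob-moves $\tilde B_m = B[\tilde x_m,\tilde\rho_m]$ against $f(S)$ in the $(\alpha,\beta,c,\lambda\rho)$-game, I would take preimages: define $B_m := f^{-1}(\tilde B_m) = B[f^{-1}(\tilde x_m),\tilde\rho_m/\lambda]$. Because $f$ is a similarity with factor $\lambda$, preimages of closed balls are closed balls whose radii are rescaled by $1/\lambda$. The nested containment $B_m \subseteq B_{m-1}$, the ratio bound $\rho_m \ge \beta\rho_{m-1}$, and the initial bound $\rho_0 = \tilde\rho_0/\lambda \ge \rho$ all follow immediately from the corresponding properties of $(\tilde B_m)$, so the $(B_m)$ form a legal Bob-play in the original game.

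Next I would transfer Alice's response. Her winning strategy for $S$ produces a countable collection $\mathcal A_m = \{A(\rho_{i,m},h_{i,m})\}_i$ with $\sum_i \rho_{i,m}^c \le (\alpha\rho_m)^c$ (or the analogous condition when $c=0$). Define the pushed-forward response $\tilde{\mathcal A}_m := \{A(\lambda\rho_{i,m}, f(h_{i,m}))\}_i$. Since $f$ preserves balls up to scaling, $f(A(\rho_{i,m},h_{i,m})) = A(\lambda\rho_{i,m},f(h_{i,m}))$, and the budget condition becomes
\[
\sum_i (\lambda\rho_{i,m})^c = \lambda^c\sum_i \rho_{i,m}^c \le \lambda^c(\alpha\rho_m)^c = (\alpha\tilde\rho_m)^c,
\]
so $\tilde{\mathcal A}_m$ is a legal Alice-move in the $(\alpha,\beta,c,\lambda\rho)$-game.

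Finally I would compare outcomes. Because $f$ is a homeomorphism, $\bigcap_m \tilde B_m = f(\bigcap_m B_m)$, so the outcomes satisfy $\tilde x_\infty = f(x_\infty)$; the non-local rule $\tilde\rho_m \to 0$ is equivalent to $\rho_m \to 0$. Moreover, $\tilde x_\infty \in A(\lambda\rho_{i,m},f(h_{i,m}))$ if and only if $x_\infty \in A(\rho_{i,m},h_{i,m})$. Therefore, whenever the strategy guarantees that either $x_\infty$ is erased or $x_\infty \in S$, the pushed-forward strategy guarantees that either $\tilde x_\infty$ is erased or $\tilde x_\infty \in f(S)$. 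This shows $f(S)$ is $(\alpha,\beta,c,\lambda\rho)$-winning; the reverse implication is identical with $f$ replaced by $f^{-1}$ (which is a similarity of ratio $1/\lambda$). No step is genuinely an obstacle—the only thing to be careful about is that the scaling on the radius $\rho$ in the final parameter comes from Bob's initial ball, not from Alice's budget, which is the reason $\alpha$, $\beta$, and $c$ are unchanged while $\rho$ becomes $\lambda\rho$.
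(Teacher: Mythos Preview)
Your argument is correct and is exactly the natural one: conjugate the game by the similarity $f$, pulling back Bob's moves and pushing forward Alice's responses, and observe that every constraint (nesting, the ratio $\beta$, the budget inequality, and the initial-radius bound) is preserved under the scaling. The paper does not actually give a proof of this proposition; it simply records it as ``easy to see'' with a reference to \cite{BFS}, so there is nothing to compare against beyond noting that your write-up is the standard verification one would expect.
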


\section{Proof of main theorems}

Broderick, Fishman and Simmons \cite{BFS} proved that, for the Cantor set $M_{\varepsilon}$ from Example \ref{excantor}, the ``filled in'' set $(-\infty,0) \cup M_{\varepsilon} \cup (1, +\infty)$ is $\left( \frac{2 \varepsilon}{(1- \varepsilon)\beta}, \beta, 0, \frac{\beta}{2} \right)$-winning for all $\beta \in (0,1)$. We generalize this to compact sets:

\begin{proposition}\label{Cantor winning}
Let $C$ be a compact set with $conv (C)=[0,1]$ and $\tau:= \tau(C)>0$. Then $S:=(-\infty,0) \cup C \cup (1, +\infty)$ is $\left( \frac{1}{\tau \beta}, \beta, 0, \frac{\beta}{2} \right)$-winning for all $\beta \in (0,1)$.
\end{proposition}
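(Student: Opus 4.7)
My plan is to construct an explicit strategy for Alice and verify both (i) a budget condition and (ii) a coverage condition. Order the complementary intervals $G_1,G_2,\dots$ of $[0,1]\setminus C$ in non-increasing length and have Alice maintain a set $\mathcal{E}\subset\mathbb{N}$ of indices she has already handled, initially empty. At move $m$, let $n(m):=\min\{n\notin\mathcal{E}:G_n\cap B_m\neq\emptyset\}$ when it exists, and have Alice erase the closed ball of radius $|G_{n(m)}|/2$ centered at the midpoint of $G_{n(m)}$ (which covers $G_{n(m)}$ exactly) and then add $n(m)$ to $\mathcal{E}$; otherwise she passes. Since $c=0$, this uses precisely one ball per move.

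The coverage step is a finite counting argument. If $x_\infty\in G_n$ for some $n$, then $x_\infty\in G_n\cap B_m$ at every move $m$, so $G_n$ is always an eligible candidate. If Alice never selects $G_n$ then at every move her choice lies in $\{1,\dots,n-1\}$, from which indices are removed one at a time and never reinstated (since $B_m\subseteq B_0$ monotonically), forcing her to pick $G_n$ after at most $n-1$ moves and thereby covering $x_\infty$. The remaining possibilities $x_\infty\in C$ or $x_\infty\notin[0,1]$ place $x_\infty\in S$ directly. Together with the budget condition this proves Alice wins.

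The delicate step is the budget inequality $|G_{n(m)}|/2\le\alpha\rho_m=\rho_m/(\tau\beta)$. By the minimality of $n(m)$, every non-erased gap strictly longer than $G_{n(m)}$ is disjoint from $B_m$, so one can identify an interval $I_m$ arising in the construction of $C$ with $B_m\subseteq I_m$ and such that $G_{n(m)}$ is the first gap to be removed from $I_m$ by length. Newhouse's thickness then yields the structural bound $|I_m|\ge(2\tau+1)|G_{n(m)}|$, and the key remaining task is to bound $|I_m|$ above by a suitable multiple of $\rho_m$. I would do this by case analysis on the position of $B_m$ inside $I_m$: when $B_m\supseteq G_{n(m)}$ the bound $|G_{n(m)}|\le 2\rho_m$ is immediate; when $B_m$ lies inside $G_{n(m)}$ or meets only one side of it, I expect to rule the case out by going back to move $m-1$, where the slow-shrink condition $\rho_{m-1}\le\rho_m/\beta$ would have already given Alice enough room to cover $G_{n(m)}$, contradicting $n(m)\notin\mathcal{E}$. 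The factor $1/\beta$ in Alice's budget enters precisely through this one-step look-back.

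The main obstacle I foresee is making the look-back argument uniform across all moves, in particular handling $m=0$ (no earlier move) and configurations where Bob plays to deliberately slip inside a large gap between moves. At $m=0$ the unconditional bound $|G_{n(0)}|\le 1/(2\tau+1)$ together with $\rho_0\ge\beta/2$ already yields $|G_{n(0)}|/2\le 1/(4\tau+2)\le \rho_0/(\tau\beta)$, which suffices; for $m\ge 1$ the argument will rely on a careful inductive invariant relating the nested intervals $I_{m-1}\supseteq I_m$ and the growing set $\mathcal{E}$, combining the thickness inequalities $|L|,|R|\ge\tau|G|$ for each splitting of an interval in the construction with the slow-shrink constraint $\rho_m\ge\beta\rho_{m-1}$.
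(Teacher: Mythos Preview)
Your coverage argument is fine, and the strategy itself (erase the least-index not-yet-handled gap) can in fact be shown legal. But the legality argument you sketch has two concrete errors. First, in the case $B_m\supseteq G_{n(m)}$ the bound $|G_{n(m)}|\le 2\rho_m$ does \emph{not} give $|G_{n(m)}|/2\le\rho_m/(\tau\beta)$: you are off by a factor $\tau\beta$, and the interesting regime is precisely $\tau\beta>1$ (otherwise $\alpha\ge 1$ and Alice wins trivially). Second, your look-back argument claims that at move $m-1$ Alice would already have covered $G_{n(m)}$, contradicting $n(m)\notin\mathcal{E}$. But your own rule at move $m-1$ selects the \emph{smallest} eligible index, which may be some $n(m-1)<n(m)$; there is no contradiction. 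Relatedly, the containment ``$B_m\subseteq I_m$'' you invoke fails whenever an already-erased $G_j$ with $j<n(m)$ still meets $B_m$. A correct argument for your strategy proceeds differently: if no $G_j$ with $j<n(m)$ meets $B_m$ and $m\ge 1$, then $n(m-1)$ exists with $n(m-1)<n(m)$ and both $G_{n(m-1)},G_{n(m)}$ meet $B_{m-1}$; since the thickness inequality forces $\operatorname{dist}(G_{n(m)},G_{n(m-1)})\ge\tau|G_{n(m)}|$, one gets $2\rho_{m-1}\ge\tau|G_{n(m)}|$ and hence $|G_{n(m)}|\le 2\rho_m/(\tau\beta)$ via $\rho_{m-1}\le\rho_m/\beta$.

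The paper's proof avoids all of this by using a different, memoryless strategy: Alice erases $G_n$ only when $|B_m|\le\min\{|L_n|,|R_n|\}$ (and $B_m\cap G_n\neq\emptyset$). That size condition forces $B_m\subset L_n\cup G_n\cup R_n$, so such an $n$ is automatically unique, and at the \emph{first} move $m_n$ where it holds one has $|B_{m_n}|\ge\beta\min\{|L_n|,|R_n|\}\ge\beta\tau|G_n|$ directly from the slow-shrink rule (or from $\rho_0\ge\beta/2$ when $m_n=0$). Waiting for the right scale rather than acting greedily is what makes the budget check a one-line computation.
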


\begin{proof}
We have to describe Alice's strategy. Given a move for Bob $B$, how does Alice respond?
If there exists $n \in \N$ such that $B$ intersects $G_n$ and $|B| \leq \min \{ |L_n| ,|R_n| \}$, then $B \subset L_n \cup G_n \cup R_n$, so $B \cap G_n \neq \emptyset$ and $B \cap G_k = \emptyset$ for all $1 \leq k <n$. Alice erases $G_n$ if it is a legal movement. In any other case, Alice does not erase anything.

To show that this strategy is winning, suppose that $x_{\infty} \notin \bigcup_m A_m$. We want to see that $x_{\infty} \in S$.
By contradiction, suppose that $x_{\infty} \notin S$. Then there exists $n$ such that $x_{\infty} \in G_n$. We will show that Alice erases $G_n$ at some stage of the game.
By definition $x_{\infty} \in B_m$ for all $m \in \N_0$, and we assumed $x_{\infty} \in G_n$, so $x_{\infty} \in B_m \cap G_n$ for all $m \in \N_0$.
Since $\tau>0$, we have that $\min \{ |L_n|,|R_n| \}>0$. And also $\lim_{m \to \infty}|B_m|=0$, so taking $m_n \in \N_0$ to be the smallest integer such that $\min \{ |L_n|,|R_n| \} \geq |B_{m_n}|$, we know that $B_{m_n} \cap G_n \neq \emptyset$ and $B_{m_n} \cap G_k = \emptyset$ for all $1 \leq k <n$.
If $m_n=0$, then \[|B_0|=2\rho_0 \geq 2 \rho=\beta \geq \beta \min \{ |L_n|,|R_n| \}.\]
If $m_n>0$, then \[|B_{m_n}|\geq \beta |B_{m_n-1}|> \beta \min \{ |L_n|,|R_n| \}.\]
So, we have $|B_{m_n}|\geq \beta \min \{ |L_n|,|R_n| \}$. Hence, $$|G_n|\leq \frac{1}{\tau} \min \{ |L_n|,|R_n| \} \leq \frac{1}{\tau \beta} |B_{m_n}|=\alpha |B_{m_n}|.$$
This means that it is legal for Alice to erase $G_n$ in the $m_n$-th turn, and her strategy specifies that she does so.
If we suppose $m_i=m_j$ then the first complementary interval intersecting $B_{m_i}=B_{m_j}$ is $G_j$ and also $G_i$, so $i=j$. So, the elements of $\{m_n: \ n \in \N\}$ are all different.
\end{proof}

\begin{observation}
Let $C$ be a compact set with $conv (C)=[0,1]$ and $\tau:= \tau(C)>0$. Then, by the previous proposition and monotonicity, \[S:=(-\infty,0) \cup C \cup (1, +\infty)\] is a $\left( \frac{1}{\tau \beta}, \beta, c, \frac{\beta}{2} \right)$-winning set for all $\beta \in (0,1)$ and all $c \geq 0$.
\end{observation}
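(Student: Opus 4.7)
The plan is essentially to read the observation as a one-line corollary of the two preceding results in the section. First I would apply Proposition \ref{Cantor winning} directly with $\alpha = \frac{1}{\tau\beta}$ and $\rho = \frac{\beta}{2}$, which yields that $S$ is $\left(\frac{1}{\tau\beta}, \beta, 0, \frac{\beta}{2}\right)$-winning for every $\beta \in (0,1)$. This handles the base case $c = 0$.

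Next I would upgrade the winning property from $c = 0$ to arbitrary $c \geq 0$ via Proposition \ref{Monotonicity} (monotonicity in the exponent). The conceptual reason the upgrade is free is transparent from the rules of the game: in the $c = 0$ game Alice erases a single ball of radius at most $\alpha \rho_m$, while in the $c > 0$ game she may erase a countable collection $\{B(h_{i,m}, \rho_{i,m})\}_i$ subject to $\sum_i \rho_{i,m}^c \leq (\alpha \rho_m)^c$. A single ball of radius $r \leq \alpha \rho_m$ trivially satisfies $r^c \leq (\alpha \rho_m)^c$, so every legal move in the $c = 0$ game is a legal move in the $c > 0$ game with the same $\alpha$. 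Consequently Alice's strategy from the proof of Proposition \ref{Cantor winning} (erase the complementary gap $G_n$ at the first turn $m_n$ at which doing so becomes legal) transfers verbatim, and the same outcome analysis applies unchanged.

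I do not anticipate any obstacle: no new combinatorial or analytic input is required, and the parameters $\alpha$, $\beta$, $\rho$ are preserved. The purpose of recording the observation is to make explicit, for later use, that Proposition \ref{Cantor winning} holds across the full range of exponents $c \geq 0$; this is essential because the countable intersection property (Proposition \ref{Countable intersection property}) that will be invoked in the proofs of Theorems \ref{finiteexplicit} and \ref{BilipExplicit} requires $c > 0$.
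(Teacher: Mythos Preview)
Your proposal is correct and matches the paper's approach exactly: the observation is just Proposition~\ref{Cantor winning} combined with the monotonicity in $c$ from Proposition~\ref{Monotonicity}. Your additional explanation of why the $c=0$ strategy remains legal for $c>0$ is a helpful gloss but not strictly needed, since that is precisely what Proposition~\ref{Monotonicity} already encodes.
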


By following the proof given in \cite[Theorem 5.5]{BFS} closely, and making every step quantitative in our setting, we obtain the following result:

\begin{theorem}\label{teoexplicito}
Let $S\subset \R$ be an $(\alpha,\beta,c,\rho)$-winning set, with $0< c < 1$ and $0< \beta \leq \frac{1}{4}$. Then for all balls $B_0 \subset \R$ with radius larger or equal than $\rho$, we have:
\[\dim_{\rm H} (S \cap B_0 ) \geq 1-\frac{1440\alpha\log(6)}{|\log(\beta)|} > 0 \text{ if } \alpha^c \leq \frac{1}{720^2} (1 − \beta^{1−c}).\]
\end{theorem}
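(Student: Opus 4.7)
The plan is to reprove \cite[Theorem 5.5]{BFS} while carefully tracking every numerical constant. The conceptual scheme is: force Bob into a rich branching tree of legal moves, so that the infinite branches form a Cantor subset of $S\cap B_0$ (since Alice plays her winning strategy throughout), and then bound the Hausdorff dimension of this Cantor subset from below via the mass distribution principle.

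First, set up the tree. Starting from $B_0$ (of radius $\rho_0\ge\rho$), inductively at stage $m$, inside each currently active Bob-ball $B$ of radius $\rho_m=\beta^m\rho_0$, pack a maximal collection of $N$ pairwise disjoint candidate sub-balls of radius $\beta\rho_m$, well-separated inside $B$. On the real line, and given $\beta\le 1/4$, one can arrange $N$ of order $1/\beta$ with an explicit packing constant. Each candidate is a legal Bob-move, so Alice must respond by erasing a family with total $c$-weight at most $(\alpha\rho_m)^c$. A candidate is \emph{killed} if it meets one of Alice's erased balls; a single erased ball of radius $r$ meets at most $1+\lceil r/(\beta\rho_m)\rceil$ candidates. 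Summing over Alice's balls and splitting according to whether $r\le \beta\rho_m$ or $r>\beta\rho_m$, and using $x\le x^c$ when $x\le 1$ and $c<1$, the number of killed candidates in $B$ is bounded by $K(\alpha/\beta)^c$ for an explicit absolute constant $K$. The hypothesis $\alpha^c\le 720^{-2}(1-\beta^{1-c})$ is exactly the threshold that makes this kill-count a small fraction of $N$; the factor $1-\beta^{1-c}$ emerges naturally from this same bookkeeping, which is why it appears in the statement.

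With the tree built, at least $N'\ge N(1-\eta)$ children survive at each level, where $\eta=O(\alpha^c\beta^{1-c})$ is controlled by the hypothesis. The Cantor limit set $\mathcal{C}\subset S\cap B_0$ carries the natural uniform Bernoulli measure assigning mass $(N')^{-m}$ to each level-$m$ cylinder, which is a ball of radius $\beta^m\rho_0$. The mass distribution principle yields
\[
\dim_H(S\cap B_0)\ \ge\ \dim_H(\mathcal{C})\ \ge\ \frac{\log N'}{|\log\beta|}\ \ge\ 1-\frac{\log 6+(-\log(1-\eta))}{|\log\beta|},
\]
with $\log 6$ absorbing the linear packing constant on $\R$. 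Since $-\log(1-\eta)\le 2\eta$ for small $\eta$, and using the hypothesis to convert $\eta=O(\alpha^c\beta^{1-c})$ into a term linear in $\alpha$ (either by choosing $c$ close to $1$ or by direct estimation using $\alpha^c\le 720^{-2}$), this collapses to the advertised bound $1-1440\alpha\log(6)/|\log\beta|$.

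The main obstacle is purely arithmetic: each ``$\lesssim$'' in the BFS argument has to be turned into an explicit numerical inequality, the packing constant on the line fixed (giving the $\log 6$), and the passage from the $c$-weight estimate to the linear-in-$\alpha$ dimension bound handled so that the specific constants $720^2$ and $1440$ fall out cleanly. I expect no new ideas beyond those in \cite{BFS}; the work is in the bookkeeping, and especially in the careful interplay between the hypothesis $\alpha^c\le 720^{-2}(1-\beta^{1-c})$, the condition $\beta\le 1/4$, and the restriction $c<1$ that is needed for the Hölder-type step $x\le x^c$ to be applicable.
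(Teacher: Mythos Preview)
Your proposal has a genuine structural gap that would make the argument fail to give the stated bound.

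The key issue is the \emph{grouping of stages}. In the paper's proof, the Cantor tree does not branch at every game-turn; instead $N:=\lfloor 1/(720\alpha)\rfloor$ consecutive turns are grouped, and branching occurs only at levels $jN$, with roughly $\beta^{-N}$ children per node. This is what produces the $\alpha$-linear dimension deficit: the mass distribution bound gives $\dim_H \ge \log(\tfrac{1}{6}\beta^{-N})/(N|\log\beta|)=1-\log 6/(N|\log\beta|)$, and $N\gtrsim 1/\alpha$ converts this into $1-1440\alpha\log 6/|\log\beta|$. In your scheme, branching at every stage with $N'\sim (1/\beta)$ children yields $\dim_H\ge \log N'/|\log\beta|=1-\log(\text{const})/|\log\beta|$, a bound \emph{independent of $\alpha$} and therefore unable to match the claimed estimate as $\alpha\to 0$.

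The second gap is the handling of past erasures. In the potential game the outcome $x_\infty$ must avoid \emph{all} of Alice's erasures from \emph{all} turns, not just the current one; and Alice's balls at turn $m$ (of radius up to $\alpha\rho_m$) can be large compared with candidates at much later turns. Your kill-count only tallies Alice's response at the current turn. The paper handles this by introducing a potential $\phi_j(B)=\sum_{n<j}\sum_{A\in\mathcal A_n^*(B)}\rho_{i,n}^c$ that accumulates the $c$-weight of all earlier erasures touching $B$, declares $B$ ``good'' when $\phi_j(B)\le(\gamma\rho_j)^c$, and proves that a good ball at level $jN$ has at least $\tfrac{1}{6}\beta^{-N}$ good children at level $(j+1)N$. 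The factor $1-\beta^{1-c}$ in the hypothesis does \emph{not} arise from a large/small radius split as you suggest; it is the tail of the geometric series $\sum_{k\ge 1}\beta^{k(1-c)}$ that appears when summing the contributions of Alice's responses over the $N$ intermediate turns between branching levels. Finally, note that $c$ is fixed in the hypothesis, so the remark ``by choosing $c$ close to $1$'' is not available; the passage from $\alpha^c$ to $\alpha$ in the paper goes through $\alpha\le\alpha^c\le 720^{-2}(1-\beta^{1-c})\le 720^{-2}(1-c)|\log\beta|$, which together with $N\ge 1/(1440\alpha)$ closes the estimates.
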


Since the proof of this theorem is very technical, we defer its proof in the Appendix A. Versions of Theorems \ref{finiteexplicit} and \ref{BilipExplicit} without an explicit constant can be deduced applying \cite[Theorem 5.5]{BFS} instead of Theorem \ref{teoexplicito}.


\subsection{Proof of Theorem \ref{finiteexplicit}}

\begin{proof}[Proof of Theorem \ref{finiteexplicit}]

Without loss of generality we can assume that $\text{conv}(C)=[0,1]$ and also that the finite set is $\{b_1, \cdots, b_n\} \subset [0, \frac{1}{8}]$, because we are trying to find a homothetic copy.

By Proposition \ref{Invariance under similarities} and \ref{Cantor winning}, we know that $S_i:=(-\infty, -b_i) \cup (C-b_i) \cup (1-b_i, +\infty)$ is  $(\frac{1}{\tau \beta}, \beta, c, \frac{\beta}{2})$-winning for all $\beta \in (0,1)$ and $c>0$. Let $\alpha:=\frac{1}{\tau \beta}$.

Then, by Proposition \ref{Countable intersection property}, $S:= \bigcap_{i=1}^n S_i$ is $(\frac{n^{\frac{1}{c}}}{\tau \beta}, \beta, c , \frac{\beta}{2})$-winning for all $\beta \in (0,1)$ and $c>0$. Taking $\beta:=\frac{1}{4}$, $c := 1- \frac{1}{\log(\alpha^{-1})} =1- \frac{1}{\log(\frac{\tau}{4})}$, $B:=[\frac{3}{8}, \frac{5}{8}]$ of radius $\frac{1}{8}=\frac{\beta}{2}=\rho$, we have $\alpha^c=e \alpha=\frac{e4}{\tau}$.

By using Theorem \ref{teoexplicito}, we have that:
\[
\text{dim}_{\rm H} (S \cap B)>0
\]
if \begin{equation}\label{cond cardinal} n \alpha^c \leq \frac{1}{720^2} (1 − \beta^{1−c}). \end{equation}

So, to guarantee the presence of a homothetic copy of a set of size $n$, it is sufficient that $n \leq \frac{\tau}{720^2 e 4}(1-\frac{1}{4}^{\frac{1}{\log(\frac{\tau}{4})}})$.
Since $f(\tau):=\log (\tau) \left( 1-4^{-\frac{1}{\log(\frac{\tau}{4})}} \right)$ is a decreasing function, and $\lim_{\tau \to +\infty}f(\tau)=\log(4)$, we get a simplified condition to guarantee the presence of a homothetic copy of a set of size $n$:
\[n \leq N(\tau):= \left\lfloor\frac{\log (4)}{4 e (720)^2}\frac{\tau}{\log (\tau)}\right\rfloor.\]

For those values of $n$, we have seen $\text{dim}_{\rm H}(S \cap B)>0$. For each $x\in S\cap B$, using $0 \leq b_i \leq \frac{1}{8}$, we have $$x+b_i \in (B+b_i)\cap(S+b_i) \subset \left[ \frac{3}{8}, \frac{6}{8} \right]\cap \left( (-\infty, 0) \cup C \cup (1, +\infty) \right).$$
Since $[\frac{3}{8}, \frac{6}{8}]$ is disjoint from $(-\infty, 0)$ and $(1, +\infty)$, we have that $x+b_i \in C$.

So $x+ \{ b_1, \cdots , b_n \}$ is a translated copy of the given finite set, which is contained in $C$.
\end{proof}

\subsection{Proof of Theorem \ref{BilipExplicit}}

In this section we prove Theorem \ref{BilipExplicit}. For this, we need to generalize the invariance under similarities (Proposition \ref{Invariance under similarities}) to bi-Lipschitz functions.

\begin{proposition}\label{bilip winning}
Let $f: \R \to \R$ be a bi-Lipschitz function with constants $(c_1, c_2)$.
If $S$ is a $(\alpha, \beta, 0, \rho)$-winning set, then $f(S)$ is a $(\frac{c_2}{c_1}\alpha, \frac{c_2}{c_1} \beta,0,c_2 \rho)$-winning set.
\end{proposition}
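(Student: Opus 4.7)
The plan is to lift Alice's winning strategy from the $S$-game to a winning strategy for the game on $f(S)$ by translating Bob's moves back through $f^{-1}$ and pushing Alice's erasures forward through $f$. As a preliminary, note that a bi-Lipschitz map $f:\R\to\R$ with constants $(c_1,c_2)$ satisfies $c_1\le c_2$, is continuous and injective hence monotone, and the lower bound $c_1|x-y|\le|f(x)-f(y)|$ forces it to be surjective; thus $f$ is a bijection of $\R$ sending closed intervals to closed intervals. Fix a winning strategy $\sigma$ for the $(\alpha,\beta,0,\rho)$-game with target $S$.

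Given Bob's play $B_m'=B[x_m',\rho_m']$ in the $(\tfrac{c_2}{c_1}\alpha,\tfrac{c_2}{c_1}\beta,0,c_2\rho)$-game on $f(S)$, Alice internally constructs a move $\tilde B_m$ for Bob in a simulated $(\alpha,\beta,0,\rho)$-game on $S$ as follows: $\tilde B_m$ is a closed interval of radius $\tilde\rho_m:=\rho_m'/c_1$, contained in $\tilde B_{m-1}$, and containing the preimage $f^{-1}(B_m')$. This is possible because $f^{-1}(B_m')$ is an interval of length at most $2\rho_m'/c_1=2\tilde\rho_m$, sitting inside $f^{-1}(B_{m-1}')\subseteq\tilde B_{m-1}$ whose radius is $\tilde\rho_{m-1}\ge\tilde\rho_m$; a routine interval-fitting argument (shifting the center toward whichever endpoint of $\tilde B_{m-1}$ is nearer) yields the desired $\tilde B_m$. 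The simulated game is legal: $\tilde\rho_0\ge c_2\rho/c_1\ge\rho$ because $c_2\ge c_1$; the nesting $\tilde B_m\subseteq\tilde B_{m-1}$ is enforced by construction; $\tilde\rho_m/\tilde\rho_{m-1}=\rho_m'/\rho_{m-1}'\ge\tfrac{c_2}{c_1}\beta\ge\beta$; and $\tilde\rho_m\to 0$ since $\rho_m'\to 0$.

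Alice now feeds this simulated game to $\sigma$, obtaining erasures $\tilde A_m=B[\tilde h_m,\tilde r_m]$ with $\tilde r_m\le\alpha\tilde\rho_m$, and in the real game on $f(S)$ plays $A_m':=B[f(\tilde h_m),c_2\tilde r_m]$. The upper Lipschitz bound gives $f(\tilde A_m)\subseteq A_m'$, and the radius satisfies $c_2\tilde r_m\le\tfrac{c_2}{c_1}\alpha\rho_m'$, so the move is legal. The outcome $x_\infty'=\bigcap_m B_m'$ then satisfies $f^{-1}(x_\infty')\in\bigcap_m f^{-1}(B_m')\subseteq\bigcap_m\tilde B_m$, which is the outcome of the simulated game. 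By the winning property of $\sigma$, either $f^{-1}(x_\infty')\in S$, so $x_\infty'\in f(S)$, or $f^{-1}(x_\infty')\in\tilde A_m$ for some $m$, so $x_\infty'\in f(\tilde A_m)\subseteq A_m'$. Either way Alice wins.

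The main obstacle is the geometric step of fitting $\tilde B_m$ inside $\tilde B_{m-1}$ with the prescribed radius $\rho_m'/c_1$: the preimages $f^{-1}(B_m')$ are automatically nested with lengths respecting the ratio constraint, but their natural centers $f^{-1}(x_m')$ need not be, so one must explicitly shift the center of $\tilde B_m$ to respect the nesting while keeping the radius exactly $\rho_m'/c_1$. Once this small interval-fitting point is dealt with, everything else is routine bookkeeping with the Lipschitz constants.
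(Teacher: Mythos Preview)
Your proof is correct, but the paper's argument is more direct and avoids the step you flag as the ``main obstacle.'' Instead of enlarging each preimage to a ball of the worst-case radius $\rho_m'/c_1$ and then worrying about fitting it inside the previous one, the paper simply takes $\tilde B_m := f^{-1}(B_m')$ itself as Bob's simulated move. Since we are in $\R$ and $f$ is a monotone bijection, $f^{-1}(B_m')$ is already a closed interval, i.e.\ a closed ball with some radius $\rho_m \in [\rho_m'/c_2,\rho_m'/c_1]$; the nesting $\tilde B_m\subseteq\tilde B_{m-1}$ is then automatic, and the bi-Lipschitz bounds give $\rho_{m+1}\ge \rho_{m+1}'/c_2\ge\beta\rho_m'/c_1\ge\beta\rho_m$ and $\rho_0\ge\rho_0'/c_2\ge\rho$ directly. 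Likewise, for the erasures the paper pushes forward $A_m':=f(\tilde A_m)$ itself (again an interval) rather than the enlarged ball $B[f(\tilde h_m),c_2\tilde r_m]$. So the interval-fitting argument you carry out is sound but unnecessary here; what your version would buy is robustness in settings where preimages of balls are not balls (e.g.\ bi-Lipschitz maps in higher dimensions), at the cost of the extra geometric step.
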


\begin{proof}
Bob plays a sequence $B_0 \supseteq B_1 \supseteq \cdots$ of nested intervals of length $|B_m|=2 \sigma_m$, with $\sigma_m \to 0$, $\sigma_{m+1}\geq \frac{c_2}{c_1} \beta \sigma_{m}$ and $\sigma_0 \geq \rho c_2$.

We want to define a strategy for Alice $(A_m)_m$ satisfying the rules of the $(\frac{c_2}{c_1}\alpha, \frac{c_2}{c_1}\beta,0,\rho)$-game and guaranteeing that if $x_{\infty}':= \bigcap_m B_m \notin \bigcup_m A_m$, then $x_{\infty}' \in f(S)$.

Since $(f^{-1}(B_m))_m$ is a sequence of nested closed intervals of length $| f^{-1}(B_m) |=:2 \rho_m \in \left[ \frac{2\sigma_m}{c_2}, \frac{2\sigma_m}{c_1} \right]$, then $x_{\infty}:=f^{-1}(x_{\infty}')=\bigcap_m f^{-1}(B_m)$.

We have that
\[\rho_{m+1} \geq \frac{\sigma_{m+1}}{c_2} \geq \frac{\beta \sigma_{m}}{c_1} \geq  \beta \rho_m \]
and $\rho_{0} \geq \frac{\sigma_{0}}{c_2} \geq  \rho$.
Since $S$ is a $(\alpha, \beta, 0, \rho)$-winning set, then Alice has a strategy $(A_m')_m$ satisfying the rules of the $(\alpha, \beta, 0, \rho)$-game and guaranteeing that if $x_{\infty}=\bigcap_m f^{-1}(B_m) \notin \bigcup_m A_m'$, then $x_{\infty} \in S$.

We have that $|A_m'|=:2\rho_m'$ where $\rho_m' \leq \alpha \rho_m$. We define $A_m:=f(A_m')$. Then, we have that $|A_m|=:2\sigma_m' \in \left[c_1 2\rho_m', c_2 2\rho_m' \right]$. So, \[\sigma_m' \leq c_2 \rho_m' \leq c_2 \alpha \rho_m \leq \frac{c_2}{c_1} \alpha \sigma_m.\]

If Alice's strategy is $(A_m)_m$, then as long as $x_{\infty}' \notin \bigcup_m A_m$, we have that $x_{\infty} \notin \bigcup_m A_m'$, and since $S$ is a $(\alpha, \beta, 0, \rho)$-winning set, we have that $x_{\infty} \in S$. So, $x_{\infty}'=f(x_{\infty}) \in f(S)$.
\end{proof}

Now, we are able to prove the theorem.

\begin{proof}[Proof of Theorem \ref{BilipExplicit}]
 Let $\beta:=\min\{ \frac{m}{D}; \frac{1}{4A}\}$, $\tilde{\beta}:=A\beta=\min\{ \frac{Am}{D}; \frac{1}{4}\} \in (0, \frac{1}{4}]$, $\alpha:=\frac{1}{\tau\beta}$ and $c := 1- \frac{1}{\log(\alpha^{-1})} =1- \frac{1}{\log(\tau \beta)}$ (so $\alpha^c=e\alpha=\frac{e}{\tau \beta}$).

By Proposition \ref{Cantor winning}, $S:=(-\infty,0) \cup C \cup (1, +\infty)$ is $\left( \alpha, \beta, 0, \frac{\beta}{2}\right)$-winning for all $\beta \in (0,1)$.
Then, by Proposition \ref{bilip winning}, for every $f$ in $\mathcal{F}$ we have that \[f^{-1}(S) \text{ is } \left( \frac{c_2(f)}{c_1(f)} \alpha, \frac{c_2(f)}{c_1(f)} \beta, 0, (c_1(f))^{-1} \frac{\beta}{2} \right)-\text{winning}.\]
So, by monotonicity (Proposition \ref{Monotonicity}), for every $f$ in $\mathcal{F}$ we have that \[f^{-1}(S) \text{ is } \left( A \alpha, A \beta, c, D\frac{\beta}{2} \right)-\text{winning } \forall c > 0, \ \forall \beta \in (0,\frac{1}{A}).\]
Then, by the countable intersection property (Proposition \ref{Countable intersection property}), \[\bigcap_{1 \leq i \leq n}f_i^{-1}(S)\] is $\left( n^{\frac{1}{c}}A \alpha, A \beta, c, D \frac{\beta}{2} \right)$-winning.

By hypothesis $\bigcap_{1 \leq i \leq n}f_i^{-1}([0,1])$ contains a closed interval $B$ of length $m$, and by definition of $\beta$ we know that $D \beta \in (0, m]$, so $\bigcap_{1 \leq i \leq n}f_i^{-1}([0,1])$ contains an interval of length $D \beta$.
Then, by applying Theorem \ref{teoexplicito}, we know that in order to guarantee the presence of any bi-Lipschitz pattern in the family $\mathcal{F}=\mathcal{F}_{A,D}$ of size $n$, it is a sufficient condition for $n$ to satisfy that \[n A^c \alpha^c \leq \frac{1}{720^2} (1 − \tilde{\beta}^{1−c}),\] so,
\[ n \leq  \frac{\tau \min\{ \frac{m}{D}; \frac{1}{4A}\}}{720^2 A^c e} \left( 1 − \min\{ \frac{Am}{D}; \frac{1}{4}\}^{\frac{1}{\log(\tau \beta)}} \right).\]
Note that when $\tau$ is large enough, we have $\frac{\tau \min\{ \frac{m}{D}; \frac{1}{4A}\}}{720^2 A^c e} \left( 1 − \min\{ \frac{Am}{D}; \frac{1}{4}\}^{\frac{1}{\log(\tau \beta)}} \right) \sim \frac{\tau}{\log(\tau)}$.

For those values of $n$, we know that $$\text{dim}_{\rm H} \left( \bigcap_{1 \leq i \leq n}f_i^{-1}(S) \cap B \right) > 0.$$ For each $x \in \bigcap_{1 \leq i \leq n}f_i^{-1}(S) \cap B$, we have $f_i(x) \in S$ for all $i$, and by hypothesis $x\in B \subset \bigcap_{1\leq i \leq n} f_i^{-1}([0,1])$, so $f_i(x) \in S \cap [0,1]=C$ for all $i$.
\end{proof}

\begin{observation}
The previous theorem implies Theorem \ref{finiteexplicit}: We can suppose that $conv(C)=[0,1]$ and the finite set $\{b_1, \cdots, b_n\}$ is contained in $[0, \frac{1}{8}]$. By taking $f_i(x):=x+b_i$, $c_1=c_2=A=D=1$, $\mathcal{F}:=\{x \mapsto x+b : \ b\in[0,\frac{1}{8}] \}$ and the closed interval $[\frac{3}{8}, \frac{5}{8}]$, the hypotheses of the previous theorem are satisfied.
\end{observation}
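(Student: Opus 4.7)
The plan is to derive Theorem \ref{finiteexplicit} as the specialization of Theorem \ref{BilipExplicit} to a one-parameter family of translations. First I would normalize: since we are only seeking a \emph{homothetic} copy of the given finite set, we may simultaneously rescale and translate $C$ and the finite set so that $\text{conv}(C)=[0,1]$ and $\{b_1,\dots,b_n\}\subset[0,1/8]$. The upper bound $1/8$ on the translations is chosen to leave a ``safe zone'' strictly inside $[0,1]$ on which the progression can ultimately be located, so that the endpoints $0$ and $1$ of $\text{conv}(C)$ cannot spoil the conclusion.

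Next, I would take
\[
\mathcal{F}:=\{T_b:x\mapsto x+b \mid b\in[0,1/8]\},
\]
so that every $T_b$ is an isometry with $c_1(T_b)=c_2(T_b)=1$, allowing us to set $A=D=1$ in Theorem \ref{BilipExplicit}. To verify the common-interval hypothesis, observe that $T_b^{-1}([0,1])=[-b,1-b]\supseteq[0,7/8]$ for every $b\in[0,1/8]$, so $I:=[3/8,5/8]$, of length $m=1/4$, lies in $\bigcap_{f\in\mathcal{F}}f^{-1}([0,1])$. With these parameters, $\beta=\min\{m/D,\,1/(4A)\}=1/4$, and the threshold from Theorem \ref{BilipExplicit} becomes
\[
N_{\mathrm{bilip}}(\tau)=\left\lfloor\frac{\tau/4}{720^2\,e}\bigl(1-4^{-1/\log(\tau/4)}\bigr)\right\rfloor.
\]
Applying Theorem \ref{BilipExplicit} to the subfamily $\mathcal{F}_0:=\{T_{b_1},\dots,T_{b_n}\}$, assuming $n\le N_{\mathrm{bilip}}(\tau)$, yields a set of $x\in I$ of positive Hausdorff dimension such that $T_{b_i}(x)=x+b_i\in S:=(-\infty,0)\cup C\cup(1,\infty)$ for each $i$. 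Because $x+b_i\in[3/8,3/4]\subset[0,1]$ is disjoint from the two unbounded pieces of $S$, one must have $x+b_i\in C$, so $\{x+b_i\}_{i=1}^n$ is the desired translated copy.

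The only remaining task is to pass from $N_{\mathrm{bilip}}(\tau)$ to the cleaner form $N(\tau)=\lfloor\log(4)\,\tau/(4e\cdot 720^2\log\tau)\rfloor$ stated in Theorem \ref{finiteexplicit}. This reduces to the inequality $\log(\tau)\bigl(1-4^{-1/\log(\tau/4)}\bigr)\ge\log 4$ in the relevant range of $\tau$. I would prove it as an elementary calculus exercise: set $u:=\log(\tau/4)$ and expand $1-e^{-\log(4)/u}$ as a power series in $1/u$ to see that the left-hand side equals $\log 4 + (\log 4)^2/(2u)+O(1/u^2)$, which tends to $\log 4$ from above, and is moreover monotonically decreasing. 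This monotonicity/limit check is the only mild technical point in the argument; everything else amounts to unpacking definitions and confirming that the chosen constants are admissible in Theorem \ref{BilipExplicit}.
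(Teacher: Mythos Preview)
Your proposal is correct and follows essentially the same route as the paper: the observation in the paper is really just the statement of the specialization, and the actual direct proof of Theorem \ref{finiteexplicit} earlier in the paper does precisely what you describe (same normalization, same family of translations, same interval $[3/8,5/8]$, same computation $\beta=1/4$, and the same passage to the simplified $N(\tau)$ via the fact that $\log(\tau)\bigl(1-4^{-1/\log(\tau/4)}\bigr)$ is decreasing with limit $\log 4$). Your write-up simply fills in a bit more detail on the common-interval check and the calculus step.
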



\appendix
\section{Proof of Theorem \ref{teoexplicito}}

We can assume without loss of generality that the radius of $B_0$ is $\rho$. We let $x_0$ be the center of $B_0$, \[\rho_n:=\beta^n \rho,\] \[E_n:=\frac{\rho_n}{2}\Z +x_0,\] \[\mathcal{E}_n:=\left\{ B(\frac{\rho_n}{2}z +x_0, \rho_n): \ z \in \Z \right\}.\]
We will take Bob's move of the $n$-turn from $\mathcal{E}_n$.

We also define \[D_n:=3 \rho_n \Z+x_0 \subset E_n,\] \[\mathcal{D}_n:=\{B(3 \rho_n z+x_0, \rho_n): \ z \in \Z \} \subset \mathcal{E}_n.\] Note that the elements of $\mathcal{D}_n$ are disjoint.

Let $N:=\lfloor \frac{1}{720\alpha}\rfloor$. We fix $\gamma \in (0,1)$, a small number to be determined later.

We define the function $\pi_n : \mathcal{E}_{n+1} \to \mathcal{E}_n$, $B \mapsto \pi_n(B)$ in the following way:
\begin{itemize}
\item When $n\neq jN$ for all $j$: we define $\pi_n(B)$ as the element of $\mathcal{E}_n$ that contains $B$ such that $B$ is as centered as possible inside that element.
\item When $n=jN$ for some $j$: If there exists $B'\in \mathcal{D}_{jN}$ containing $B$, we define $\pi_n(B):=B'$ (it is well defined because in that case there is only one element belonging to $\mathcal{D}_{jN}$). If not, we define the function as before.
\end{itemize}

Intuitively the function $\pi_n$ carries the elements of level $n+1$ to its ancestor of level $n$.

We use the following notation: for $m<n$ and $B \in \mathcal{E}_n$, $\pi_m(B):= \pi_m \circ \pi_{m+1} \circ \cdots \circ \pi_{n-1}(B) \in \mathcal{E}_m$. This is to say, we carry $B$ to its ancestor of level $m$ via the functions $\pi$.
If Bob plays $B \in \mathcal{E}_n$ in the turn $n$, we consider that in the previous turns $m \in \{0, \cdots, n-1\}$ Bob has played $\pi_m(B)$.
Then, we have the following inclusions of movements from the turn $n$ to the turn $0$: \[B \subset \pi_{n-1}(B) \subset \cdots \subset \pi_0(B).\]
Alice responds under her winning strategy. If in the turn $n$ Bob plays $B \in \mathcal{E}_n$, we define $\mathcal{A}(B)$ as Alice's answer (each $A \in \mathcal{A}(B)$ is a countable collection of sets $A:= \{A_{i,n}\}_i$, and a legal movement as an answer for $B$ i.e.: $\sum_i \rho_{i,n}^c \leq (\alpha \rho_n)^c$). Let \[\mathcal{A}^*_m(B):=\{A \in \mathcal{A}(\pi_m(B)) : B\cap A \neq \emptyset \}\] be Alice's answer (this is a list of sets) to the ancestor of $B$ of level $m<n$.

Given any ball $B$, we denote by $\frac{1}{2}B$ the ball with the same center as $B$ and the half of the radius.

Note that as $\beta \leq \frac{1}{4}$, if $B \in \mathcal{D}_{jN}$ and $B'\in \mathcal{E}_{jN+1}$ satisfying that $B' \cap \frac{1}{2}B \neq \emptyset$, then $B' \subset B$, so $\pi_{jN}(B')=B$. It follows that \begin{equation}\label{eqpi}\text{if } n>jN, \ B' \subset \frac{1}{2}B \text{ with } B'\in \mathcal{E}_n \text{ and } B \in \mathcal{D}_{jN}, \text{ then } \pi_{jN}(B')=B.\end{equation}
This is true because if we look at the ancestor of $B'$ of level $jN+1$, since $\pi_n$ chooses the element belonging to $\mathcal{E}_n$ that contains $B$ such that $B$ is as centered as possible, that element must intersect $\frac{1}{2}B$.

We define for every $B \in \mathcal{D}_j$ \[\phi_j (B):= \sum_{n<j} \ \sum_{A \in \mathcal{A}^*_n(B)} \rho_{i,n}^c.\] This is a measure of all of Alice's answers to the ancestors of $B$. Note that $\phi_0 (B)=0$.

Let
\[\mathcal{D}'_j:=\{ B \in \mathcal{D}_j: \ \phi_j(B) \leq (\gamma \rho_j)^c\}.\]
We define \[\mathcal{D}_j(B):=\{B' \in \mathcal{D}_j : \ B' \subset \frac{1}{2}B\}.\]

\subsubsection{Some useful bounds}

\begin{observation}\label{cota card 1}
If $B\in \mathcal{D}'_{jN}$ and if $N \geq 2$, by using that $\beta \leq \frac{1}{4}$, we have that
\begin{itemize}
\item $|\frac{1}{2}B|=\beta^{jN}\rho$,
\item $|B'|=2\beta^{(j+1)N}\rho$ for every $B' \in \mathcal{D}_{(j+1)N}$,
\item $|H|=\beta^{(j+1)N}\rho$ for every complementary interval $H$ between two consecutive intervals of $\mathcal{D}_{(j+1)N}$ .
\end{itemize}
Therefore,
\begin{align*}
\#\mathcal{D}_{(j+1)N}(B)&\geq \frac{|\frac{1}{2}B|}{|B'|+|H|}-2= \frac{\beta^{jN}\rho}{3\beta^{(j+1)N}\rho}-2\\
&=\frac{1}{3}\beta^{-N}-2=\beta^{-N}(\frac{1}{3}-2\beta^N) \geq \beta^{-N}\frac{7}{24}.
\end{align*}
\end{observation}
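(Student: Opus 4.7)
The plan is purely a length computation followed by a pigeonhole count of how many translates of a period fit inside the interval $\frac{1}{2}B$.

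First I would verify the three bullet points. Since $B\in\mathcal{D}_{jN}$ is a ball of radius $\rho_{jN}=\beta^{jN}\rho$, the ball $\frac{1}{2}B$ (half the radius, same center) has length $\beta^{jN}\rho$, giving the first bullet. Each $B'\in\mathcal{D}_{(j+1)N}$ has radius $\rho_{(j+1)N}=\beta^{(j+1)N}\rho$, so $|B'|=2\beta^{(j+1)N}\rho$, giving the second bullet. For the third, consecutive centers of $\mathcal{D}_{(j+1)N}$ are spaced exactly $3\rho_{(j+1)N}$ apart (by the definition $D_{(j+1)N}=3\rho_{(j+1)N}\Z+x_0$), and since each ball has diameter $2\rho_{(j+1)N}$, the gap between consecutive balls has length $\rho_{(j+1)N}=\beta^{(j+1)N}\rho$.

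Next I would count the members of $\mathcal{D}_{(j+1)N}$ contained in $\frac{1}{2}B$. The family $\mathcal{D}_{(j+1)N}$ is a periodic arrangement with ``period'' (one ball plus one gap) of length $|B'|+|H|=3\beta^{(j+1)N}\rho$. An interval of length $|\tfrac{1}{2}B|=\beta^{jN}\rho$ therefore contains at least $\tfrac{\beta^{jN}\rho}{3\beta^{(j+1)N}\rho}=\tfrac{1}{3}\beta^{-N}$ complete periods, up to a boundary correction of at most $2$ (one potentially lost period at each endpoint of $\frac{1}{2}B$). Consequently $\#\mathcal{D}_{(j+1)N}(B)\ge \tfrac{1}{3}\beta^{-N}-2=\beta^{-N}\bigl(\tfrac{1}{3}-2\beta^{N}\bigr)$.

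Finally, I would simplify using the hypotheses $\beta\le 1/4$ and $N\ge 2$, which give $\beta^{N}\le 1/16$ and hence a positive absolute lower bound on $\tfrac{1}{3}-2\beta^N$ of the order claimed. There is no real obstacle; the only care point is justifying the constant $2$ in the boundary correction, which I would do by noting that a complete period fully inside $\frac{1}{2}B$ forces the ball inside that period to lie inside $\frac{1}{2}B$ as well, so losses occur only at the two extreme periods meeting the endpoints of $\frac{1}{2}B$.
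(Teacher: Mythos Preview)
Your proposal is correct and follows exactly the paper's approach: the observation in the paper is self-contained, with the three length computations and the period-counting inequality $\frac{|\frac{1}{2}B|}{|B'|+|H|}-2$ written into the statement itself, and you reproduce precisely this reasoning. Your hedging on the final constant is in fact warranted: with $\beta\le 1/4$ and $N\ge 2$ one gets $\frac{1}{3}-2\beta^N\ge \frac{1}{3}-\frac{1}{8}=\frac{5}{24}$, so the paper's displayed $\frac{7}{24}$ appears to be a slip (harmless, since only the weaker bound $\frac{1}{4}\beta^{-N}$ is used downstream in Lemma~\ref{canthijos}).
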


\begin{proposition}\label{cota card 2}
If $\alpha^c \leq \frac{1}{720^2}(1-\beta^{1-c})$, we have that \[\#(\mathcal{D}_{(j+1)N}(B)\setminus \mathcal{D}'_{(j+1)N}) \leq \frac{1}{12} \beta^{-N} \text{ for all } B\in \mathcal{D}'_{jN}.\]
\end{proposition}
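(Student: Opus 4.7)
The plan is to bound the aggregate potential $T := \sum_{B' \in \mathcal{D}_{(j+1)N}(B)}\phi_{(j+1)N}(B')$ and then apply Markov's inequality,
\[
\#(\mathcal{D}_{(j+1)N}(B)\setminus\mathcal{D}'_{(j+1)N}) \leq \frac{T}{(\gamma\rho_{(j+1)N})^c},
\]
so that it suffices to show $T \leq \tfrac{1}{12}\beta^{-N}(\gamma\rho_{(j+1)N})^c$. The geometric input I would use throughout is that the elements of $\mathcal{D}_{(j+1)N}$ are $2\rho_{(j+1)N}$-intervals with centres on the lattice $3\rho_{(j+1)N}\Z+x_0$, so any interval of radius $\rho_i$ meets at most $\tfrac{2\rho_i}{3\rho_{(j+1)N}}+2$ of them.

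Swapping the order of summation gives $T = \sum_i \rho_i^c\,\#\{B'\in\mathcal{D}_{(j+1)N}(B): A_i\cap B'\neq \emptyset\}$, where the outer sum ranges over Alice's erasures at all levels $n<(j+1)N$ and all relevant ancestors; I would split it according to the level $n$ into an \emph{early} part $(n<jN)$ and a \emph{late} part $(jN\leq n<(j+1)N)$. For the early part, equation \eqref{eqpi} forces $\pi_n(B')=\pi_n(B)$ for every $B'\in\mathcal{D}_{(j+1)N}(B)$ and every $n<jN$, so all the early erasures are Alice's responses along the single chain of ancestors of $B$; the hypothesis $B\in\mathcal{D}'_{jN}$ then bounds their total $\rho^c$-mass by $(\gamma\rho_{jN})^c$ and also forces $\rho_i\leq\gamma\rho_{jN}$ individually, which gives $\sum\rho_i^{c+1}\leq\gamma^{c+1}\rho_{jN}^{c+1}$ and hence bounds the early piece by a multiple of $\gamma^{c+1}\beta^{-N}\rho_{jN}^c+\gamma^c\rho_{jN}^c$. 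For the late part, at each level $n$ Alice's budget at each ancestor $B''\in\mathcal{E}_n$ contained in $B$ is $\sum_i\rho_i^c\leq(\alpha\rho_n)^c$; using the identity $\sum_{B''}\#\{B'\in\mathcal{D}_{(j+1)N}(B):\pi_n(B')=B''\}=M\leq\beta^{-N}$ bounds the level-$n$ contribution by $M(\alpha\rho_n)^c$ (with a refinement from the intersection count for the smallest erasures), and summing the geometric series $\sum_{n=jN}^{(j+1)N-1}(\alpha\rho_n)^c$ produces the factor $(1-\beta^{Nc})/(1-\beta^c)$, which after normalising by $(\gamma\rho_{(j+1)N})^c = \gamma^c\beta^{Nc}\rho_{jN}^c$ is exactly where the denominator $(1-\beta^{1-c})$ in the hypothesis comes from.

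The main obstacle I anticipate is the quantitative reconciliation of these two bounds with the concrete constants $\tfrac{1}{720^2}$ and $\tfrac{1}{12}$. After normalisation, the early bound scales like $\gamma\beta^{-N(1+c)}$ while the late one scales like $\alpha^c\beta^{-N}/(\gamma^c(1-\beta^{1-c}))$, so the still-free parameter $\gamma$ enters the two estimates at opposite powers and I would fix it by optimising the sum (roughly $\gamma\sim\alpha^{c/(c+1)}$ up to constants). Feeding the optimum back in collapses the whole bound to a single inequality of the form $\alpha^c(1-\beta^{1-c})^{-1}\leq C_0$ for an absolute constant $C_0$, and matching $C_0$ to the specific $\tfrac{1}{720^2}$ is then a matter of carefully tracking the factors of $2$, $3$ and the $+2$'s that come out of the lattice intersection count, together with the relation $N=\lfloor 1/(720\alpha)\rfloor$ fixed at the start of the proof of Theorem~\ref{teoexplicito}. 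Because $\gamma$ is pinned between two opposing constraints, it is this two-sided tuning, rather than any single geometric estimate, where I expect the delicate work to lie.
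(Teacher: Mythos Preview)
Your plan via the plain Markov inequality $\#(\ldots)\le T/(\gamma\rho_{(j+1)N})^c$ does not close, and the obstruction is structural rather than a matter of constants. Carry your own early-part estimate through: dividing $T_{\text{early}}\le \gamma^{c+1}\beta^{-N}\rho_{jN}^c+2\gamma^c\rho_{jN}^c$ by $(\gamma\rho_{(j+1)N})^c=\gamma^c\beta^{Nc}\rho_{jN}^c$ gives $\gamma\,\beta^{-N(1+c)}+2\beta^{-Nc}$, and to make the first summand $\lesssim\beta^{-N}$ you are forced to take $\gamma\lesssim\beta^{Nc}$. But then your late bound $M\sum_n(\alpha\rho_n)^c$, once normalised, is of order $\alpha^c\gamma^{-c}\beta^{-N-Nc}/(1-\beta^c)$, and with $\gamma^{-c}\gtrsim\beta^{-Nc^2}$ this would force $\alpha^c\lesssim\beta^{Nc(1+c)}$, i.e.\ $\alpha$ exponentially small in $N$---incompatible with $N=\lfloor 1/(720\alpha)\rfloor$. (Note also that the geometric series you write has ratio $\beta^c$, not $\beta^{1-c}$, so it is \emph{not} where the factor $(1-\beta^{1-c})$ in the hypothesis enters.) The underlying reason the Markov bound overshoots is that a single large erasure $A$ with $\rho_A\gg\rho_{(j+1)N}$ is charged $\rho_A^c/(\gamma\rho_{(j+1)N})^c\gg 1$ for \emph{each} of the $\sim\rho_A/\rho_{(j+1)N}$ balls $B'$ it meets, whereas it can make each such $B'$ bad at most once.

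The missing ingredient is a term-by-term truncation. One bounds the bad count not by $T/(\gamma\rho_{(j+1)N})^c$ but by
\[
\sum_{B'}\ \sum_{n<(j+1)N}\ \sum_{A_{i,n}\in\mathcal{A}^*_n(B')}\min\Bigl\{1,\ \frac{\rho_{i,n}^c}{(\gamma\rho_{(j+1)N})^c}\Bigr\},
\]
pushing the $\min$ inside the sum defining $\phi$. After swapping sums (exactly as you propose), the truncated weight pairs with your intersection count $(\rho_{i,n}+2\rho_{(j+1)N})/\rho_{(j+1)N}$, and the product is controlled by the elementary inequality
\[
\min\bigl\{1,(x/\gamma y)^c\bigr\}\,(x+2y)\ \le\ 3x^c\max\bigl\{x^{1-c},\ \gamma^{-c}y^{1-c}\bigr\}.
\]
It is this $\max\{\cdot,\cdot\}$ that converts the level-sum into $\sum_k\beta^{k(1-c)}\le(1-\beta^{1-c})^{-1}$ and keeps everything at scale $\beta^{-N}$. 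With this device no optimisation of $\gamma$ is needed: the paper simply fixes $\gamma=\tfrac{1}{72}$, after which the three conditions $5N\alpha\le\tfrac{1}{144}$, $\alpha^c/(1-\beta^{1-c})\le 720^{-2}$ and $\beta^{N(1-c)}\le\tfrac{1}{72}$ give the bound $\tfrac{1}{12}\beta^{-N}$ directly. Your early/late split and intersection-count lemma are the right scaffolding, but without the truncation the estimate cannot be balanced.
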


We denote by $\text{rad}(B)$ the radius of the ball $B$. W start by proving two preliminary lemmas:
\begin{lemma}\label{cota card 3}
\begin{enumerate}[a)]
\item For all $n \in \N$ and $B' \in \mathcal{E}_n$ we have that
\[\sum_{A \in \mathcal{A}(B')} \min\left\{1, \frac{\rho_{i,n}^c}{(\gamma \rho_{(j+1)N})^c}\right\}\frac{\rho_{i,n}+2 \rho_{(j+1)N}}{\textup{rad}(B')}\leq 3 \alpha^c \max\left\{\alpha^{1-c}, \gamma^{-c} \left( \frac{\rho_{(j+1)N}}{\textup{rad}(B')} \right)^{1-c}\right\}.\]
\item If $B\in \mathcal{D}'_{jN}$ then \[\sum_{n<jN} \ \sum_{A \in \mathcal{A}^*_n(B)} \min\left\{1, \frac{\rho_{i,n}^c}{(\gamma \rho_{(j+1)N})^c}\right\}\frac{\rho_{i,n}+2 \rho_{(j+1)N}}{\textup{rad}(B)}\leq 3 \gamma^c \max\left\{\gamma^{1-c}, \gamma^{-c} \left( \frac{\rho_{(j+1)N}}{\textup{rad}(B)} \right)^{1-c}\right\}.\]
\end{enumerate}
\end{lemma}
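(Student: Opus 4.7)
The plan is to treat both parts via a common case analysis on the size of each erased radius $\rho_{i,n}$ relative to $\rho_{(j+1)N}$, and then to combine the two cases via a ``dominant coefficient'' trick that avoids losing a factor of $2$ when one passes from the partial sums to the full sum.

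Write $r:=\rho_{(j+1)N}$ and denote the denominator ($\textup{rad}(B')$ in part (a), $\textup{rad}(B)=\rho_{jN}$ in part (b)) by $R$. For each ball $A=B(h_{i,n},\rho_{i,n})$ appearing in one of the collections, set
\[
T_i:=\min\!\Big\{1,\,\tfrac{\rho_{i,n}^c}{(\gamma r)^c}\Big\}\,\tfrac{\rho_{i,n}+2r}{R}.
\]
In the \emph{small-gap regime} $\rho_{i,n}\le r$, I bound the min by its second argument and use $\rho_{i,n}+2r\le 3r$, obtaining $T_i\le 3\rho_{i,n}^c\,r^{1-c}/(\gamma^c R)$. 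In the \emph{large-gap regime} $\rho_{i,n}>r$, I bound the min by $1$ and use $\rho_{i,n}+2r<3\rho_{i,n}$; writing $\rho_{i,n}=\rho_{i,n}^c\cdot\rho_{i,n}^{1-c}$ and applying the legality bound $\rho_{i,n}\le \alpha R$ in part (a) --- or the stronger bound $\rho_{i,n}\le \gamma \rho_{jN}$ in part (b), which follows from $B\in \mathcal{D}'_{jN}$ and the definition of $\phi_{jN}$ --- gives $T_i\le 3\rho_{i,n}^c(\alpha R)^{1-c}/R$ in part (a) and $T_i\le 3\rho_{i,n}^c(\gamma \rho_{jN})^{1-c}/R$ in part (b).

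The crucial observation is that in either regime one has $T_i\le (3\rho_{i,n}^c/R)\,M$, where $M$ is the \emph{maximum} of the two coefficients arising in the two cases. Summing over $i$ (and over $n<jN$ in part (b)) and invoking the single aggregated constraint --- $\sum_i \rho_{i,n}^c \le (\alpha R)^c$ from Alice's legality in part (a), and $\sum_{n<jN}\sum_i \rho_{i,n}^c = \phi_{jN}(B)\le (\gamma \rho_{jN})^c$ from membership in $\mathcal{D}'_{jN}$ in part (b) --- reduces everything to a brief exponent calculation. The resulting powers of $r$, $R$, $\gamma$, and $\alpha$ collapse exactly onto the expressions inside the $\max$ on the stated right-hand sides.

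The main technical obstacle is precisely this ``bound each $T_i$ by a common upper envelope before summing'' step: a naive case split followed by adding the two partial sums costs a factor of $2$ that would destroy the constant $3$. Once this is noticed, parts (a) and (b) are structurally identical; part (b) only replaces the per-level legality constraint by the aggregated bound coming from $B\in\mathcal{D}'_{jN}$, which correspondingly replaces $\alpha R$ by $\gamma \rho_{jN}$ in the large-gap estimate and turns the leading $\alpha^c$ into $\gamma^c$.
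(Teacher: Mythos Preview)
Your proof is correct and follows essentially the same approach as the paper: the paper first isolates the pointwise inequality $\min\{1,x^c/(\gamma y)^c\}(x+2y)\le 3x^c\max\{x^{1-c},y^{1-c}/\gamma^c\}$ abstractly via the same two-case split on $x\gtrless y$, then specializes to $x=\rho_{i,n}/R$, $y=\rho_{(j+1)N}/R$ and pulls the max out of the sum using the same uniform bounds $\rho_{i,n}\le \alpha R$ (part a) and $\rho_{i,n}\le \gamma\rho_{jN}$ (part b). Your ``bound each $T_i$ by the common envelope $M$ before summing'' is exactly this step carried out in situ rather than via an abstract lemma.
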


\begin{proof}[Proof of Lemma \ref{cota card 3}]

Firstly, we will see that \begin{equation}\label{cotaxy}\min\left\{1,\frac{x^c}{(\gamma y)^c}\right\}(x+2y) \leq 3 x^c \max\left\{x^{1-c}, \frac{y^{1-c}}{\gamma^c}\right\} \text{ for all }x,y>0.\end{equation}
If $x \geq y$: \[\min\left\{1, \frac{x^c}{(\gamma y)^c}\right\}(x+2y)\leq x+2y \leq 3x\leq 3 x^c \max\left\{x^{1-c}, \frac{y^{1-c}}{\gamma^c}\right\}.\]
If $x < y$: \[\min\left\{1, \frac{x^c}{(\gamma y)^c}\right\}(x+2y)\leq \frac{x^c}{(\gamma y)^c} (x+2y)< \frac{x^c3y}{(\gamma y)^c}  \leq 3 x^c \max\left\{x^{1-c}, \frac{y^{1-c}}{\gamma^c}\right\}.\]

Secondly, we will prove that if $n \in \N$ and $B' \in \mathcal{E}_n$ then the claim a) holds:
By applying the inequality \eqref{cotaxy} to $x:=\frac{\rho_{i,n}}{\text{rad}(B')}$ and $y:=\frac{\rho_{(j+1)N}}{\text{rad}(B')}$, summing over all $A_{i,n} \in \mathcal{A}(B')$ and using that Alice is playing legally, we have that
\begin{align*}
&\sum_{A_{i,n} \in \mathcal{A}(B')} \min\left\{1, \frac{\rho_{i,n}^c}{(\gamma \rho_{(j+1)N})^c}\right\}\frac{\rho_{i,n}+2 \rho_{(j+1)N}}{\text{rad}(B')}\\
&\leq 3  \sum_{A_{i,n} \in \mathcal{A}(B')} \left(\frac{\rho_{i,n}}{\text{rad}(B')} \right)^c \max\left\{\left(\frac{\rho_{i,n}}{\text{rad}(B')}\right)^{1-c}, \gamma^{-c} \left(\frac{\rho_{(j+1)N}}{\text{rad}(B')}\right)^{1-c} \right\}\\
&\leq 3 \max\left\{ \left(\max_{A_{i,n} \in \mathcal{A}(B')} \frac{\rho_{i,n}}{\text{rad}(B')}\right)^{1-c}, \gamma^{-c} \left(\frac{\rho_{(j+1)N}}{\text{rad}(B')}\right)^{1-c} \right\} \sum_{A_{i,n} \in \mathcal{A}(B')} \left(\frac{\rho_{i,n}}{\text{rad}(B')} \right)^c\\
&\leq 3 \alpha^c \max\left\{\alpha^{1-c}, \gamma^{-c} \left(\frac{\rho_{(j+1)N}}{\text{rad}(B')}\right)^{1-c}\right\}.
\end{align*}

Finally, we prove the claim b). By applying the inequality \eqref{cotaxy} to $x:=\frac{\rho_{i,n}}{\text{rad}(B)}$ and $y:=\frac{\rho_{(j+1)N}}{\text{rad}(B)}$, summing over all element of $\bigcup_{n <jN}\mathcal{A}^*_n(B)$, and using that since $B\in \mathcal{D}'_{jN}$ we have \[\sum_{n<jN} \ \sum_{A_{i,n} \in \mathcal{A}^*_n(B)}\left(\frac{\rho_{i,n}}{\text{rad}(B)}\right)^c \leq \gamma^c,\] and in particular $\frac{\rho_{i,n}}{\text{rad}(B)} \leq \gamma$ for every $i$ and every $n<jN$. We obtain that:
\begin{align*}
&\sum_{n<jN} \ \sum_{A_{i,n} \in \mathcal{A}^*_n(B)} \min\left\{1, \frac{\rho_{i,n}^c}{(\gamma \rho_{(j+1)N})^c}\right\}\frac{\rho_{i,n}+2 \rho_{(j+1)N}}{\text{rad}(B)}\\
&\leq 3 \sum_{n<jN} \ \sum_{A_{i,n} \in \mathcal{A}^*_n(B)}\left(\frac{\rho_{i,n}}{\text{rad}(B)} \right)^c \max\left\{\left(\frac{\rho_{i,n}}{\text{rad}(B)} \right)^{1-c}, \gamma^{-c} \left(\frac{\rho_{(j+1)N}}{\text{rad}(B)}\right)^{1-c}\right\}\\
&\leq 3 \left( \sum_{n<jN} \ \sum_{A_{i,n} \in \mathcal{A}^*_n(B)}\left(\frac{\rho_{i,n}}{\text{rad}(B)} \right)^c\right) \max\left\{\gamma^{1-c}, \gamma^{-c} \left(\frac{\rho_{(j+1)N}}{\text{rad}(B)}\right)^{1-c}\right\}\\
&\leq 3 \gamma^c \max\left\{\gamma^{1-c}, \gamma^{-c} \left( \frac{\rho_{(j+1)N}}{\text{rad}(B)} \right)^{1-c}\right\}.
\end{align*}

\end{proof}

\begin{lemma}
If $B\in \mathcal{D}'_{jN}$, we have that for all $B'' \in \mathcal{D}_{(j+1)N}(B)$ and all $n \in \{jN, \cdots, (j+1)N-1 \}$, there exists $B' \in \mathcal{E}_n$ with $B' \subset B$ and $B''\subset \frac{1}{2}B'$, where $B'=B$ in the particular case $n=jN$.
\end{lemma}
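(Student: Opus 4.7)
The plan is to verify the statement by case analysis on $n$. The case $n=jN$ is immediate: the hypothesis $B''\in\mathcal{D}_{(j+1)N}(B)$ means exactly $B''\subset \tfrac{1}{2}B$, so taking $B':=B\in\mathcal{E}_{jN}$ satisfies both requested inclusions, which matches the parenthetical ``$B'=B$ in the particular case $n=jN$'' in the statement.

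For $jN<n\leq (j+1)N-1$, I would construct $B'$ explicitly from the lattice $E_n=\frac{\rho_n}{2}\Z+x_0$. Let $c$ denote the center of $B''$. Because consecutive elements of $E_n$ are at distance $\rho_n/2$, there exists $c_n\in E_n$ with $|c-c_n|\leq \rho_n/4$; set $B':=B(c_n,\rho_n)\in\mathcal{E}_n$. The two required inclusions then follow from two triangle-inequality computations that exploit the geometric decay coming from $\beta\leq 1/4$.

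For $B''\subset \tfrac{1}{2}B'$: any $p\in B''$ satisfies $|p-c_n|\leq |p-c|+|c-c_n|\leq \rho_{(j+1)N}+\rho_n/4$. Since $n\leq (j+1)N-1$, we have $\rho_{(j+1)N}\leq \beta\rho_n\leq \rho_n/4$, so $|p-c_n|\leq \rho_n/2$, which is what $p\in \tfrac{1}{2}B'$ requires. For $B'\subset B$: denoting the center of $B$ by $c_B$, the hypothesis $B''\subset \tfrac{1}{2}B$ gives $|c-c_B|\leq \rho_{jN}/2-\rho_{(j+1)N}$. Combining with $|c_n-c|\leq \rho_n/4$ yields
\[
|c_n-c_B|+\rho_n\leq \tfrac{5}{4}\rho_n+\rho_{jN}/2-\rho_{(j+1)N}.
\]
The required bound $\leq \rho_{jN}$ therefore reduces to $\tfrac{5}{4}\rho_n\leq \rho_{jN}/2+\rho_{(j+1)N}$, which holds comfortably because $n\geq jN+1$ together with $\beta\leq 1/4$ gives $\rho_n\leq \beta\rho_{jN}\leq \rho_{jN}/4$, so the left hand side is at most $\frac{5}{16}\rho_{jN}$.

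The whole argument is a bookkeeping of lattice spacings against the factor $\beta$, and I do not expect a real obstacle. The only point that needs a moment of care is to confirm that the slack in $\beta\leq 1/4$ absorbs both the quantization error $\rho_n/4$ from rounding $c$ to $c_n\in E_n$ and the radius $\rho_{(j+1)N}$ of $B''$ simultaneously; this is exactly why the construction requires $n>jN$, while $n=jN$ must be handled by taking $B'=B$ directly, as the statement allows.
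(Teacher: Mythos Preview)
Your proof is correct and is in fact cleaner than the paper's. Both arguments split into the trivial case $n=jN$ (where $B':=B$ works by definition of $\mathcal{D}_{(j+1)N}(B)$) and the case $jN<n\le (j+1)N-1$, but the constructions of $B'$ differ. The paper parametrizes everything by the integers $z,z',z''$ labelling the lattice points in $D_{jN}$, $E_n$, $D_{(j+1)N}$, rewrites the two inclusions $B''\subset\tfrac12 B'$ and $B'\subset B$ as membership of $z'$ in two explicit intervals $I_1$, $I_2$, and then checks that $|I_1\cap I_2|\ge 1$ so that an integer $z'$ exists. You instead take for $B'$ the ball of $\mathcal{E}_n$ whose center $c_n\in E_n$ is closest to the center of $B''$ (quantization error at most $\rho_n/4$ since $E_n$ has spacing $\rho_n/2$), and verify both inclusions by two short triangle-inequality estimates using only $\rho_{(j+1)N}\le\beta\rho_n$ and $\rho_n\le\beta\rho_{jN}$ together with $\beta\le\tfrac14$. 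Your approach is more direct and avoids the integer-interval bookkeeping; the paper's parametrization gives no additional information here, so nothing is lost by your simplification.
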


\begin{proof}
\begin{itemize}
\item In the case $n=jN$: First, note that if $B''\in \mathcal{D}_{(j+1)N}(B)$, then $B'' \in \mathcal{E}_{(j+1)N}$, $B'' \subset \frac{1}{2}B$ where $B\in \mathcal{D}_{jN}$. If we take $B':=\pi_{jN}(B'') \in \mathcal{E}_{jN}$, we have from Equation \eqref{eqpi} that $B'=B$.

\item In the case $jN< n \leq (j+1)N$: We can write $n=jN+k$ with $k \in \{ 1, \cdots , N-1\}$.
Since $B \in \mathcal{D}_{jN}$, then \[B=B(3\rho_{jN}z+x_0, \rho_{jN}) \text{ for some } z \in \Z.\]
Since $B'' \in \mathcal{D}_{(j+1)N}(B)$, then \[B''=B(3\rho_{(j+1)N}z''+x_0, \rho_{(j+1)N}) \text{ for some } z'' \in \Z,\]
and also $B'' \subset \frac{1}{2}B$, where it is easy to see that the last inclusion is equivalent to $\frac{1}{2}-\beta^N \geq 3|\beta^N z''-z|$.

We want to find $B' \in \mathcal{E}_n$, i.e: $B'=B(\frac{\rho_n}{2}z'+x_0, \rho_n)$ with $z' \in \Z$ such that $B' \subset B$ and $B'' \subset \frac{1}{2}B'$.
The condition $B'' \subset \frac{1}{2}B'$ is equivalent to $\frac{1}{2}-\beta^{N-k} \geq |3z'' \beta^{N-k} - \frac{z'}{2}|$, and also to
\[ z' \in [ 6z'' \beta^{N-k} +1-2\beta^{N-k}, 6z'' \beta^{N-k} -1+2\beta^{N-k} ]=:I_1. \]
And the condition $B' \subset B$ is equivalent to $1-\beta^k \geq |-\frac{\beta^k}{2}z'+3z|$, and also to \[ z' \in [6 \beta^{-k}z-2(\beta^{-k}-1) , 6 z\beta^{-k}+2(\beta^{-k}-1) ]=:I_2.\]

Therefore, what we must prove is that given $z, z'' \in \Z$ satisfying $\frac{1}{2}-\beta^N \geq 3|\beta^N z''-z|$, there exists $z' \in \Z \cap I_1 \cap I_2$. For this it is  sufficient to see that the length of $I_1 \cap I_2$ is larger or equal to $1$.

We take $c_1:= 6z'' \beta^{N-k}$ and $c_2:= 6 z\beta^{-k}$ the centers of the intervals $I_1$ and $I_2$ respectively,
and $r_1:=1-2\beta^{N-k}$ and $r_2:=2(\beta^{-k}-1)$ their respective radii.

The length of $I_1$ is $2r_1=2(-1+2\beta^{N-k})\geq 2(1-\frac{2}{4})=1$ because $\beta \in (0, \frac{1}{4}]$ and $N-k \geq 1$, and the length of $I_2$ is $2r_2=4(\beta^{-k}-1)\geq 12$ because $\beta \in (0, \frac{1}{4}]$ and $k \geq 1$.

Since the length of $I_1$ is larger or equal than $1$, it is sufficient to see that $\text{dist}(c_1, c_2)\leq r_2 - r_1$.
Since
\begin{align*}
\text{dist}(c_1 , c_2)&=|6 z\beta^{-k}- 6z'' \beta^{N-k}|=6 \beta^{-k}|z-z''\beta^N|\\
&\leq 2 \beta^{-k} (\frac{1}{2}-\beta^N)= \beta^{-k}-2\beta^{N-k}.
\end{align*}
So,
\begin{align*}
r_2-r_1&=2\beta^{-k}-2-1+2\beta^{N-k}\\
&\geq \text{dist}(c_1, c_2)+(4 \beta^{N-k}+\beta^{-k}-3)\geq \text{dist}(c_1, c_2),
\end{align*}
because $4 \beta^{N-k}+\beta^{-k}-3\geq 0$ which follows from $1 \geq 3 \beta^k$ ($\beta \in (0, \frac{1}{4}]$ and $k \geq 1$).

\end{itemize}
\end{proof}

Now we are ready to prove Proposition \ref{cota card 2}.

\begin{proof}[Proof of Proposition \ref{cota card 2}]

\begin{align}\label{cotasetminus1}
\#(\mathcal{D}_{(j+1)N}(B)\setminus \mathcal{D}'_{(j+1)N})&\leq\# \left\{ B' \in \mathcal{D}_{(j+1)N}(B) : \ \frac{\phi_{(j+1)N}(B')}{(\gamma \rho_{(j+1)N})^c} >1 \right\}\nonumber\\
&\leq \sum_{B' \in \mathcal{D}_{(j+1)N}(B)} \min\left\{1, \frac{\phi_{(j+1)N}(B')}{(\gamma \rho_{(j+1)N})^c}\right\}\nonumber\\
&\leq \sum_{B' \in \mathcal{D}_{(j+1)N}(B)} \min\left\{1, \sum_{n<(j+1)N}\sum_{A_{i,n} \in \mathcal{A}^*_n(B')} \frac{\rho_{i,n}^c}{(\gamma \rho_{(j+1)N})^c}\right\}\nonumber\\
&\leq \sum_{B' \in \mathcal{D}_{(j+1)N}(B)} \sum_{n<(j+1)N} \sum_{A_{i,n} \in \mathcal{A}^*_n(B')}  \min\left\{1,\frac{\rho_{i,n}^c}{(\gamma \rho_{(j+1)N})^c}\right\} \nonumber\\
&\leq \sum_{B' \in \mathcal{D}_{(j+1)N}(B)} \sum_{n<jN} \sum_{A_{i,n} \in \mathcal{A}^*_n(B')}  \min\left\{1,\frac{\rho_{i,n}^c}{(\gamma \rho_{(j+1)N})^c}\right\}\nonumber\\
& \hspace{0.5cm} +\sum_{B' \in \mathcal{D}_{(j+1)N}(B)} \sum_{jN \leq n<(j+1)N} \ \sum_{A_{i,n} \in \mathcal{A}^*_n(B')}  \min\left\{1,\frac{\rho_{i,n}^c}{(\gamma \rho_{(j+1)N})^c}\right\}.
\end{align}

We split the last sum into two sums, depending on whether $n<jN$ or $jN \leq n <(j+1)N$.

To get a bound for the left-hand side sum of \eqref{cotasetminus1} we will use that if $n<jN$ then
\begin{align*}
&\left\{(B', A):\ B' \in \mathcal{D}_{(j+1)N}(B), A\in \mathcal{A}^*_n(B') \right\} \\
&\subset \left\{(B', A):\ B' \in \mathcal{D}_{(j+1)N}(B), A\in \mathcal{A}^*_n(B), A\cap B' \neq \emptyset \right\}.
\end{align*}
Since $B\in \mathcal{D}'_{jN}$ the set $\mathcal{A}^*_{n}(B)$ only makes sense for $n<jN$.
This inclusion holds because of Equation \eqref{eqpi}, because $\mathcal{A}(\pi_n(B')) \subset \mathcal{A}(\pi_n(B))$ since $B' \subset B$.

So, we have
\begin{align*}
&\sum_{B' \in \mathcal{D}_{(j+1)N}(B)} \sum_{A_{i,n} \in \mathcal{A}^*_n(B')} \min\left\{1,\frac{\rho_{i,n}^c}{(\gamma \rho_{(j+1)N})^c}\right\}\\
&\leq  \sum_{A_{i,n} \in \mathcal{A}^*_n(B)} \sum_{B' \in \mathcal{D}_{(j+1)N}(B) \atop B'\cap A\neq \emptyset} \min\left\{1,\frac{\rho_{i,n}^c}{(\gamma \rho_{(j+1)N})^c}\right\}\\
&= \sum_{A_{i,n} \in \mathcal{A}^*_n(B)} \min\left\{1,\frac{\rho_{i,n}^c}{(\gamma \rho_{(j+1)N})^c}\right\} \#\{B' \in \mathcal{D}_{(j+1)N}(B), B'\cap A\neq \emptyset\}.
\end{align*}

Now, we will get a bound for the right-hand side of Equation \eqref{cotasetminus1}, this is when $jN \leq n < (j+1)N$. Recall that $B \in \mathcal{D}'_{jN}$.
First, note that if $B''\in \mathcal{D}_{(j+1)N}(B)$, then $B'' \in \mathcal{E}_{(j+1)N}$, $B'' \subset \frac{1}{2}B$ where $B\in \mathcal{D}_{jN}$. If we take $B':=\pi_{jN}(B'') \in \mathcal{E}_{jN}$, by Equation \eqref{eqpi} we have that $B'=B$.

Since for all $B'' \in \mathcal{D}_{(j+1)N}(B)$, there exists $B' \in \mathcal{E}_n$ with $B' \subset B$ and $B''\subset \frac{1}{2}B'$ ($B'=B$ if $n=jN$). Hence
\begin{align*}
&\sum_{B'' \in \mathcal{D}_{(j+1)N}(B)}  \sum_{A_{i,n} \in \mathcal{A}^*_n(B'')} \min\left\{1,\frac{\rho_{i,n}^c}{(\gamma \rho_{(j+1)N})^c}\right\}\\
&\leq \sum_{B' \in \mathcal{E}_n \atop B'\subset B}   \sum_{B'' \in \mathcal{D}_{(j+1)N}(B')}   \sum_{A \in \mathcal{A}(B') \atop A\cap B''\neq \emptyset} \min\left\{1,\frac{\rho_{i,n}^c}{(\gamma \rho_{(j+1)N})^c}\right\}\\
&= \sum_{B' \in \mathcal{E}_n \atop B'\subset B}  \sum_{A \in \mathcal{A}(B')}  \sum_{B'' \in \mathcal{D}_{(j+1)N}(B') \atop A\cap B''\neq \emptyset} \min\left\{1,\frac{\rho_{i,n}^c}{(\gamma \rho_{(j+1)N})^c}\right\}\\
&=\sum_{B' \in \mathcal{E}_n \atop B'\subset B}  \sum_{A \in \mathcal{A}(B')} \min\left\{1,\frac{\rho_{i,n}^c}{(\gamma \rho_{(j+1)N})^c}\right\} \#\{B'' \in \mathcal{D}_{(j+1)N}(B'): \ A\cap B''\neq \emptyset\},
\end{align*}
where the inequality holds by considering in particular $B':=\pi_n(B'')\subset B$.

Then, by inequality \eqref{cotasetminus1}, and what we have seen before, we have

\begin{align}\label{cotasetminus2}
&\#(\mathcal{D}_{(j+1)N}(B)\setminus \mathcal{D}'_{(j+1)N})\nonumber\\
&\leq \sum_{B' \in \mathcal{D}_{(j+1)N}(B)}  \sum_{n<jN}  \sum_{A_{i,n} \in \mathcal{A}^*_n(B')}  \min\left\{1,\frac{\rho_{i,n}^c}{(\gamma \rho_{(j+1)N})^c}\right\}+\nonumber\\
&\sum_{B' \in \mathcal{D}_{(j+1)N}(B)}  \sum_{n=jN}^{(j+1)N-1}  \sum_{A_{i,n} \in \mathcal{A}^*_n(B')}  \min\left\{1,\frac{\rho_{i,n}^c}{(\gamma \rho_{(j+1)N})^c}\right\}\nonumber\\
&\leq  \sum_{n<jN}  \sum_{A_{i,n} \in \mathcal{A}^*_n(B)}  \min\left\{1,\frac{\rho_{i,n}^c}{(\gamma \rho_{(j+1)N})^c}\right\} \#\{B' \in \mathcal{D}_{(j+1)N}(B): \ B'\cap A \neq \emptyset\} +\nonumber\\
&\sum_{n=jN}^{(j+1)N-1}  \sum_{B' \in \mathcal{E}_n \atop B'\subset B}  \sum_{A_{i,n} \in \mathcal{A}^*_n(B')}  \min\left\{1,\frac{\rho_{i,n}^c}{(\gamma \rho_{(j+1)N})^c}\right\} \#\{B'' \in \mathcal{D}_{(j+1)N}(B'): \ B''\cap A \neq \emptyset\}\nonumber\\
&\leq  \frac{\text{rad}(B)}{\rho_{(j+1)N}} \sum_{n<jN}\sum_{A_{i,n} \in \mathcal{A}^*_n(B)}  \min\left\{1,\frac{\rho_{i,n}^c}{(\gamma \rho_{(j+1)N})^c}\right\} \frac{\rho_{i,n}+2\rho_{(j+1)N}}{\text{rad}(B)} +\nonumber\\
&\sum_{n=jN}^{(j+1)N-1}  \sum_{B' \in \mathcal{E}_n \atop B'\subset B} \frac{\text{rad}(B')}{\rho_{(j+1)N}}  \sum_{A_{i,n} \in \mathcal{A}^*_n(B')}  \min\left\{1,\frac{\rho_{i,n}^c}{(\gamma \rho_{(j+1)N})^c}\right\} \frac{\rho_{i,n}+2\rho_{(j+1)N}}{\text{rad}(B')},
\end{align}
where in the first term of the last inequality we use that $B \in \mathcal{D}'_{jN}$ ($\phi_{jN}(B)\leq (\gamma \rho_{jN})^c$), in the second term that Alice is playing legally ($\sum_i \rho_{i,m}^c \leq (\alpha \rho_m^c)$), and in both terms that:
for every $B'\in \bigcup_n \mathcal{E}_n$ and every $A_{i,n}$, since the elements of $\mathcal{D}_{(j+1)N}$ are disjoint, $\mathcal{L}(B'')=2 \rho_{(j+1)N}$, and if moreover $B'' \cap A_{i,n} \neq \emptyset$ we have $B'' \subset \mathcal{N}(A_{i,n}, 2\rho_{(j+1)N})$. Therefore,
\begin{align*}
&\#\{B''\in \mathcal{D}_{(j+1)N}(B'): \ B''\cap A_{i,n} \neq \emptyset\}2 \rho_{(j+1)N}
=\mathcal{L}\left( \bigcup_{B''\in \mathcal{D}_{(j+1)N}(B') \atop B''\cap A_{i,n} \neq \emptyset} B''\right)\\
&\leq \mathcal{L}\left(\mathcal{N}(A_{i,n}, 2\rho_{(j+1)N}) \right)
= \mathcal{L}(A_{i,n})+4 \rho_{(j+1)N}=2\rho_{i,n}+4 \rho_{(j+1)N},
\end{align*}
which is saying \[\#\{B''\in \mathcal{D}_{(j+1)N}(B'): \ B''\cap A_{i,n} \neq \emptyset\} \leq \frac{1}{\rho_{(j+1)N}}(\rho_{i,n}+2 \rho_{(j+1)N}).\]

By inequality \eqref{cotasetminus2},  using claim b) from Lemma \ref{cota card 3} to bound the first term and claim a) from Lemma \ref{cota card 3} to bound the second one, we have that:
\begin{align}\label{cotasetminus3}
&\#(\mathcal{D}_{(j+1)N}(B)\setminus \mathcal{D}'_{(j+1)N})\nonumber\\
&\leq  \frac{\text{rad}(B)}{\rho_{(j+1)N}} \sum_{n<jN}  \sum_{A_{i,n} \in \mathcal{A}^*_n(B)}  \min\left\{1,\frac{\rho_{i,n}^c}{(\gamma \rho_{(j+1)N})^c}\right\} \frac{\rho_{i,n}+2\rho_{(j+1)N}}{\text{rad}(B)} \nonumber\\
&+ \sum_{jN \leq n<(j+1)N}  \sum_{B' \in \mathcal{E}_n \atop B'\subset B} \frac{\text{rad}(B')}{\rho_{(j+1)N}} \ \sum_{A_{i,n} \in \mathcal{A}^*_n(B')}  \min\left\{1,\frac{\rho_{i,n}^c}{(\gamma \rho_{(j+1)N})^c}\right\} \frac{\rho_{i,n}+2\rho_{(j+1)N}}{\text{rad}(B')}\nonumber\\
&\leq \frac{\text{rad}(B)}{\rho_{(j+1)N}} 3 \gamma^c \max\{\gamma^{1-c}, \gamma^{-c} \left( \frac{\rho_{(j+1)N}}{\text{rad}(B)} \right)^{1-c}\}\nonumber\\
& + \sum_{jN \leq n<(j+1)N}  \sum_{B' \in \mathcal{E}_n \atop B'\subset B} \frac{\text{rad}(B')}{\rho_{(j+1)N}} 3 \alpha^c \max\{\alpha^{1-c}, \gamma^{-c} (\frac{\rho_{(j+1)N}}{\text{rad}(B')})^{1-c}\}
\end{align}

To continue estimating, we will use that \[\frac{\text{rad}(B)}{\rho_{(j+1)N}}=\frac{\beta^{jN}\rho}{\beta^{(j+1)N}\rho}=\beta^{-N}.\]
To bound the second inequality we write $n=(j+1)N-k$ for some $k \in \{1, \cdots, N\}$. We know that $B:=B(3\rho_{jN}z+x_0, \rho_{jN})$ for some $z \in \Z$, and recall that $\mathcal{E}_n:=\{B(\frac{\rho_n}{2}z'+x_0, \rho_n): \ z' \in \Z \}$.
So,
\[\frac{\text{rad}(B')}{\rho_{(j+1)N}}=\beta^{-k}\] and
\begin{align*}
\#\{B' \in \mathcal{E}_n : \ B'\subset B \}&\leq \#\{\frac{\rho_n}{2}z'+x_0 \in B: \ z' \in \Z \}\\
&=\#\{z' \in \Z: \ 3\rho_{jN}z+x_0-\rho_{jN} \leq \frac{\rho_n}{2}z'+x_0 \leq 3\rho_{jN}z+x_0+\rho_{jN}\}\\
&=\#\{z' \in \Z: \ \frac{2}{\rho_n}\rho_{jN}(3z-1) \leq z' \leq \frac{2}{\rho_n}\rho_{jN}(3z+1)\}\\
&\leq \frac{2}{\rho_n}\rho_{jN}(3z+1) - \frac{2}{\rho_n}\rho_{jN}(3z-1) +1\\
&=\frac{4}{\beta^{N-k}}+1 =\frac{4}{\beta^{N-k}}+\frac{\beta^{N-k}}{\beta^{N-k}}\leq \frac{5}{\beta^{N-k}}
\end{align*}

Putting all together,
\begin{align}\label{cotasetminus4}
&\#(\mathcal{D}_{(j+1)N}(B)\setminus \mathcal{D}'_{(j+1)N})\nonumber\\
&\leq \frac{\text{rad}(B)}{\rho_{(j+1)N}} 3 \gamma^c \max\{\gamma^{1-c}, \gamma^{-c} \left( \frac{\rho_{(j+1)N}}{\text{rad}(B)} \right)^{1-c}\}\nonumber\\
& \hspace{0.5cm} + \sum_{jN \leq n<(j+1)N} \ \sum_{B' \in \mathcal{E}_n \ B'\subset B} \frac{\text{rad}(B')}{\rho_{(j+1)N}} 3 \alpha^c \max\{\alpha^{1-c}, \gamma^{-c} (\frac{\rho_{(j+1)N}}{\text{rad}(B')})^{1-c}\}\nonumber\\
&\leq \beta^{-N} 3 \gamma^c \max\{\gamma^{1-c}, \gamma^{-c} \beta^{N(1-c)}\} + \sum_{1 \leq k \leq N} 5 \beta^{-(N-k)} \beta^k 3 \alpha^c \max\{\alpha^{1-c}, \gamma^{-c} \beta^{k(1-c)}\}\nonumber\\
&\leq \beta^{-N} 3 \left( \max\{\gamma, \beta^{N(1-c)}\} + 5 \left( N \alpha + \alpha^c \gamma^{-c} \sum_{1 \leq k \leq N} \beta^{k(1-c)} \right) \right),\nonumber\\
\end{align}
where in the last inequality we have used that if $a_n,b_n \geq 0$ then $\sum_n \max\{a_n, b_n\} \leq \sum_n a_n + \sum_n b_n$.

We take $\gamma :=\frac{1}{72}$. If we saw that
\begin{enumerate}
\item $5 N \alpha \leq \frac{1}{144}$,
\item $5 \gamma^{-c} \sum_{k \in N_0} \beta^{k(1-c)}\leq \frac{1}{144(72)^c}=\frac{1}{10 (72)^{1+c}}$,
\item $\beta^{N(1-c)} \leq \frac{1}{72}$,
\end{enumerate}
we would have that \[\#(\mathcal{D}_{(j+1)N}(B)\setminus \mathcal{D}'_{(j+1)N}) \leq \beta^{-N}\frac{1}{12}.\]

Let's prove (1)--(3):
\begin{enumerate}
\item Since $N:=\lfloor \frac{1}{720} \alpha^{-1} \rfloor$, then $5 N \alpha \leq \frac{1}{144}$.
\item 
By hypothesis and by using that $c \in (0,1)$, $\beta \in (0,\frac{1}{4}]$, 
we have
\[\alpha^c \frac{1}{1-\beta^{1-c}} \leq (\frac{1}{720})^2 \leq (\frac{1}{720})^{1+c}\leq \frac{1}{10(72)^{1+c}},\]
so the second claim holds.
Moreover, since we have $\alpha^c \leq \alpha^c \frac{1}{1-\beta^{1-c}}\leq (\frac{1}{720})^{1+c}$, then  $2^c \leq 720 \leq (\frac{1}{720\alpha})^c$, so $N \in \N_{\geq 2}$.
Therefore \begin{equation}\label{eq2estrellas} N:=\lfloor \frac{1}{720\alpha} \rfloor \geq  \frac{1}{1440 \alpha}\end{equation}

On the other hand, we have by using the hypothesis and $\alpha, c \in (0,1)$, that \[\alpha \leq \alpha^c \leq \frac{1}{720^2} (1- \beta^{1-c})\leq \frac{1}{720^2}|\log(\beta^{1-c})|= \frac{1}{720^2}(1-c) |\log(\beta)|,\]
where in the last inequality we have used that $1-c \in (0,1)$, $\beta \in (0,\frac{1}{4}]$, $z:=\beta^{1-c} \in (0,1)$, and $f(z):=\log(\frac{1}{z})+z+1$ is a positive function on $(0,1)$, so $1-z \leq \log(\frac{1}{z})$.
Then, \begin{equation}\label{eq1estrella}N\alpha 720^2 \leq N (1-c) |\log(\beta)|.\end{equation}

\item For the third claim it is equivalent to prove that $N(1-c)|\log(\beta)|\geq |\log(\frac{1}{72})|$. Since $720^2\geq 1440 \log(72)$, by inequalities \eqref{eq2estrellas} and \eqref{eq1estrella} we have
\[N(1-c)|\log(\beta)|\geq N \alpha 720^2 \geq \frac{1}{1440} 720^2 \geq |\log(\frac{1}{72})|.\]
\end{enumerate}

\end{proof}

\begin{lemma}\label{canthijos}
If $\alpha^c \leq \frac{1}{720^2}(1-\beta^{1-c})$, we have \[\#(\mathcal{D}_{(j+1)N}(B)\cap \mathcal{D}'_{(j+1)N}) \geq \frac{1}{6} \beta^{-N} \text{ for every } B\in \mathcal{D}'_{jN}.\]
\end{lemma}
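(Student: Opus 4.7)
The plan is to observe that this lemma is essentially an immediate corollary of the two preceding results, Observation \ref{cota card 1} and Proposition \ref{cota card 2}, via the set-theoretic identity
\[
\#(\mathcal{D}_{(j+1)N}(B)\cap \mathcal{D}'_{(j+1)N}) = \#\mathcal{D}_{(j+1)N}(B) - \#(\mathcal{D}_{(j+1)N}(B)\setminus \mathcal{D}'_{(j+1)N}).
\]

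First, I would verify that the hypothesis $\alpha^c \leq \frac{1}{720^2}(1-\beta^{1-c})$ forces $N \geq 2$; this was already shown inside the proof of Proposition \ref{cota card 2} (from $\alpha^c \leq (1/720)^{1+c}$ one derives $2^c \leq 720 \leq (1/(720\alpha))^c$, so $N = \lfloor 1/(720\alpha)\rfloor \geq 2$). This is the regime in which Observation \ref{cota card 1} applies and gives
\[
\#\mathcal{D}_{(j+1)N}(B) \geq \tfrac{7}{24}\beta^{-N}.
\]

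Next, applying Proposition \ref{cota card 2} under the same hypothesis gives
\[
\#(\mathcal{D}_{(j+1)N}(B)\setminus \mathcal{D}'_{(j+1)N}) \leq \tfrac{1}{12}\beta^{-N} = \tfrac{2}{24}\beta^{-N}.
\]

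Combining these two bounds,
\[
\#(\mathcal{D}_{(j+1)N}(B)\cap \mathcal{D}'_{(j+1)N}) \geq \tfrac{7}{24}\beta^{-N} - \tfrac{2}{24}\beta^{-N} = \tfrac{5}{24}\beta^{-N} \geq \tfrac{1}{6}\beta^{-N},
\]
since $\frac{5}{24} \geq \frac{4}{24} = \frac{1}{6}$. There is no real obstacle here: all the heavy lifting has been done in Proposition \ref{cota card 2}, and this lemma just repackages its content as a lower bound on the ``good'' children, which is the form actually used later in controlling the Hausdorff dimension of the winning set.
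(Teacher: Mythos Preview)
Your proof is correct and follows essentially the same approach as the paper: combine the lower bound from Observation \ref{cota card 1} with the upper bound from Proposition \ref{cota card 2} via the identity $\#(\mathcal{D}_{(j+1)N}(B)\cap \mathcal{D}'_{(j+1)N}) = \#\mathcal{D}_{(j+1)N}(B) - \#(\mathcal{D}_{(j+1)N}(B)\setminus \mathcal{D}'_{(j+1)N})$. The only cosmetic difference is that the paper uses the slightly weaker bound $\tfrac{1}{4}\beta^{-N}$ in place of your $\tfrac{7}{24}\beta^{-N}$, which gives exactly $\tfrac{1}{6}\beta^{-N}$ rather than $\tfrac{5}{24}\beta^{-N}$.
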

\begin{proof}
By Observation \ref{cota card 1} and Proposition \ref{cota card 2}, we have
\begin{align*}
\#(\mathcal{D}_{(j+1)N}(B)\cap \mathcal{D}'_{(j+1)N})&= \#\mathcal{D}_{(j+1)N}(B) - \#(\mathcal{D}_{(j+1)N}(B)\setminus \mathcal{D}'_{(j+1)N})\\
&\geq \frac{1}{4}\beta^{-N}-\frac{1}{12}\beta^{-N}=\frac{1}{6}\beta^{-N}.
\end{align*}
\end{proof}

\subsubsection{Conclusion of the proof}

By definition, $B_0 \in \mathcal{D}_0$. Moreover, $\phi_{0} (B_0):=0<(\gamma \rho)^c$, so $B_{0} \in \mathcal{D}'_{0}$.

We will construct a Cantor set $F$ in the following way:
\begin{itemize}
\item $\mathcal{B}_{0}:=\{B_{0}\}\subset \mathcal{D}'_{0}$.
\item Given a collection $\mathcal{B}_{j} \subset \mathcal{D}'_{jN}$ we construct a new collection of sets $\mathcal{B}_{j+1} \subset \mathcal{D}'_{(j+1)N}$ by replacing each element of $B \in \mathcal{B}_{j}$ by $M:=\lceil \frac{1}{6}\beta^{-N} \rceil$ elements of $\mathcal{D}_{(j+1)N}(B) \cap \mathcal{D}'_{(j+1)N}$ (this is possible by Lemma \ref{canthijos}).
\end{itemize}
We define \[F:=\bigcap_{j \in \N_0} \bigcup_{B \in \mathcal{B}_j}B.\]
By a standard argument (see e.g. \cite[Example 4.6]{FalconerBook}),
\begin{align*}
\dim_{\rm H} (F)&\geq \frac{\log(M)}{|\log(\beta^{N})|} \geq \frac{\log(\frac{1}{6}\beta^{-N})}{N |\log(\beta)|}\\
&=1-\frac{\log(6)}{N |\log(\beta)|}\geq 1-\frac{1440\alpha\log(6)}{|\log(\beta)|},
\end{align*}
where we used $N\geq \frac{1}{1440\alpha}$ from Equation \eqref{eq2estrellas}.

What remains to be proved is that $F \subset S \cap B_0$, because we would then get \[\dim_{\rm H} (S) \geq \dim_{\rm H} (S\cap B_{0}) \geq \dim_{\rm H} (F) \geq 1-\frac{1440\alpha\log(6)}{|\log(\beta)|}.\]

Clearly $F \subset B_0$, by definition of $F$. We have to see that $F \subset S$. Let $x \in F$. For every $j \in \N$ there exists a unique $B_{jN}\in \mathcal{B}_j$ containing $x$.
By definition of $\mathcal{B}_{j+1}$ we have that $B_{(j+1)}\subset \frac{1}{2}B_{jN}$.  By Equation \eqref{eqpi} we have $\pi_{jN}(B_{(j+1)N})=B_{jN}$.
The sequence $(B_{jN})_{j}$ can be extended in a unique way to a sequence $(B_n)_{n}$ satisfying $B_n \in \mathcal{E}_n$ and $B_n:=\pi_n(B_{n+1})$ for all $n$. We interpret this sequence as Bob's moves, to which Alice responds according to her winning strategy.

Thus, for each $x \in F$ we construct a sequence $(B_n)_n$ as before, where $x$ is the only element belonging to $\bigcap_n B_n$ (so, $x=x_{\infty}$ is the outcome of the game). We will see that $x \in S$ by contradiction. We suppose that $x \notin S$ where $S$ is an $(\alpha, \beta, c, \rho)$-winning set.
Then, $x \in \bigcup_{m \in \N_0} \bigcup_i A(\rho_{i,m}, h_{i,m})$, where $\sum_i \rho_{i,m}^c \leq (\alpha \rho_m)^c=(\alpha \beta^m \rho)^c$ (since it is a legal move for Alice we know that $\bigcup_i A(\rho_{i,m}, h_{i,m}) \in \mathcal{A}(B_m)$).
So $x \in A \in \mathcal{A}(B_m)$ for some $m$. And since $x \in B_m$, we have $x \in A \cap B_m$.
Since for every $n>m$ it holds that $\mathcal{A}^*_m(B_n)=\mathcal{A}(B_m)$ (because $\pi_m(B_n)=B_m$), then $\phi_j (B_{jN})\geq \rho_A^c$ for every $j$ such that $jN>m$ (because $\rho_A^c$ is just one term in the sum of the definition of $\phi_j (B_{jN})$ when $A \in \mathcal{A}^*_m(B_n)$).

On the other hand, since $B_{jN} \in \mathcal{D}'_j$, then $\phi_j(B_{jN})\leq (\gamma \rho_{jN})^c$. Then, by putting everything together, we have that $\rho_A \leq \gamma \rho_{jN}$ for all $j$ such that $jN>m$.
Letting $j \to \infty$, we get $\rho_A=0$ which is a contradiction. So, $x \in S$ and we have seen $F \subset S$.

We will prove that \[1-\frac{1440\alpha\log(6)}{|\log(\beta)|}>0 \text{ if } \alpha^c \leq \frac{1}{720^2} (1-\beta^{1-c}).\]

By using the hypotheses: $\alpha^c \leq \frac{1}{720^2} (1 − \beta^{1−c})$, $c \in (0,1)$, $\beta \in (0, \frac{1}{4}]$, the fact that $\alpha<1$ and that $f(z):=z-1-\log(z)$ is a positive function on $(0,1)$ (applied to $z:=\beta^{1-c} \in (0,1)$), we have that:

\begin{align*}
\frac{\alpha}{|\log(\beta)|}&\leq \frac{\alpha^c}{|\log(\beta)|}\leq \frac{1}{720^2} \frac{(1-\beta^{1-c})}{|\log(\beta)|}\\
&\leq \frac{1}{720^2} \frac{|\log(\beta^{1-c})|}{|\log(\beta)|}=\frac{1-c}{720^2}\leq \frac{1}{720^2}<\frac{1}{1440 \log(6)}.
\end{align*}
Then, $1440 \log(6) \frac{\alpha}{|\log(\beta)|}<1$. 

With this we conclude the proof of  Theorem \ref{teoexplicito}.





\end{document}